\theoremstyle{theorem}
\newtheorem{theorem}{Theorem}[section]
\newtheorem{proposition}{Proposition}[section]
\newtheorem{corollary}{Corollary}[section]
\newtheorem{lemma}{Lemma}[section]
\newtheorem{claim}{Claim}[section]
\theoremstyle{definition}
\newtheorem{problem}{Problem}[section]
\newtheorem{definition}{Definition}[section]
\newtheorem{remark}{Remark}[section]
\begin{document}

\title[Rogers Semilattices in the Analytical Hierarchy: Finite Families]{Rogers Semilattices in the Analytical Hierarchy: The Case of Finite Families}

\author{Nikolay Bazhenov}
\address{Sobolev Institute of Mathematics, 4 Acad. Koptyug Avenue, Novosibirsk, 630090, Russia}
\address{Novosibirsk State University, 2 Pirogova Street, Novosibirsk, 630090, Russia}
\email{bazhenov@math.nsc.ru}

\author{Manat Mustafa}
\address{Department of Mathematics, School of Sciences and Humanities, Nazarbayev University, 53 Qabanbay Batyr Avenue, Nur-Sultan, 010000, Kazakhstan}
\email{manat.mustafa@nu.edu.kz}

\begin{abstract}
	A numbering of a countable family $S$ is a surjective map from the set of natural numbers $\omega$ onto $S$. The paper studies Rogers semilattices, i.e. upper semilattices induced by the reducibility between numberings, for families $S\subset P(\omega)$. Working in set theory \textbf{ZF}$+$\textbf{DC}$+$\textbf{PD}, we obtain the following results on families from various levels of the analytical hierarchy.
	
	For a non-zero number $n$, by $E^1_n$ we denote $\Pi^1_n$ if $n$ is odd, and $\Sigma^1_n$ if $n$ is even. We show that for a finite family $S$ of $E^1_n$ sets, its Rogers $E^1_n$-se\-mi\-lat\-ti\-ce has the greatest element if and only if $S$ contains the least element under set-theoretic inclusion. Furthermore, if $S$ does not have the $\subseteq$-least element, then the corresponding Rogers $E^1_n$-semilattice is upwards dense.
\end{abstract}

\keywords{Theory of numberings, analytical hierarchy, upper semilattice, Rogers semilattice, universal numbering, minimal cover, elementary theory, projective determinacy, axiom of constructibility.}

\maketitle

\section{Introduction}

Let $\mathcal{S}$ be a countable set. A \emph{numbering} of $\mathcal{S}$ is a surjective map $\nu$ from the set of natural numbers $\omega$ onto $\mathcal{S}$. The origins of the theory of numberings can be traced back to the works of G{\"o}del~\cite{Goedel} and Kleene~\cite{Kleene}. The proof of G{\"o}del's incompleteness theorems uses an effective numbering of first-order formulae. Kleene (see Theorem~XXII in \S\,65 of Ref.~\cite{Kleene}) gave a construction of a universal partial computable function~--- this result provides a universal computable numbering for the family of all unary partial computable functions. At the end of 1950s, the foundations of the modern theory of numberings were developed by Kolmogorov and Uspenskii~\cite{KU,Usp-55} and, independently, by Rogers~\cite{Rogers}.

The algorithmic complexity of different numberings is typically compared via the notion of \emph{reducibility} between numberings: A numbering $\nu$ is \emph{reducible} to a numbering $\mu$, denoted by $\nu\leq \mu$, if there is total computable function $f(x)$ such that $\nu(n) = \mu(f(n))$ for all $n\in\omega$. More informally, there is an effective procedure which, given a $\nu$-index of an object from $\mathcal{S}$, computes a $\mu$-index for the same object.

Since the end of 1960s, the research in the theory of numberings has been mainly focused in the area of \emph{Rogers semilattices}. We give a very brief overview of the classical setting in this area. From now on, we consider only families $\mathcal{S}$ containing subsets of $\omega$, i.e., we always assume that $\mathcal{S} \subset P(\omega)$ and $\mathcal{S}$ is countable.

Let $\mathcal{S}$ be a family of computably enumerable (c.e.) sets. A numbering $\nu$ of the family $\mathcal{S}$ is \emph{computable} if the set
\begin{equation} \label{equ:g-nu}
	G_{\nu} = \{ \langle n,x\rangle \,\colon x\in \nu(n)\}
\end{equation}
is c.e. A family $\mathcal{S}$ is \emph{computable} if it has a computable numbering. In other words, the computability of $\mathcal{S}$ means that one can \emph{uniformly enumerate} all sets from $\mathcal{S}$. Note that in general, this enumeration \emph{allows} repetitions.

As simple examples of computable families, one can immediately recall the family of all finite sets and the family of all c.e. sets. A more delicate example can be constructed as follows. If a number $e\not\in \emptyset'$, then our family $\mathcal{T}$ contains one-element sets $\{ 2e\}$ and $\{ 2e+1\}$. If $e\in\emptyset'$, then the set $\{ 2e,2e+1\}$ belongs to $\mathcal{T}$. It is easy to see that the constructed family $\mathcal{T}$ admits a uniform enumeration and thus, $\mathcal{T}$ is computable. A more interesting feature of $\mathcal{T}$ is the following: \emph{Any} computable numbering $\nu$ of $\mathcal{T}$ must have repetitions, i.e. there are indices $m\neq n$ with $\nu(m)=\nu(n)$.

In a standard recursion-theoretic way, the notion of reducibility between numberings gives rise to the corresponding upper semilattice:
For a computable family $\mathcal{S}$, the \emph{Rogers semilattice} of $\mathcal{S}$ contains the degrees of all computable numberings of $\mathcal{S}$. As per usual, here two numberings have the same degree if they are reducible to each other. Roughly speaking, the supremum of two numberings is provided by their join, see Section~\ref{subsect:numb} for formal details.

To give a flavor of studies of computable families, we mention here two celebrated classical results on Rogers semilattices: Let $\mathcal{S}$ be a computable family, and let $R$ be its Rogers semilattice. Khutoretskii~\cite{Khut-71} proved that if $R$ contains more than one element, then $R$ is infinite. Selivanov~\cite{Sel-76} showed that an infinite $R$ cannot be a lattice.


Goncharov and Sorbi~\cite{GS-97} started developing the theory of \emph{generalized computable} numberings. One of their approaches to generalized computations can be summarized as follows. Let $\Gamma$ be a complexity class (e.g., $\Sigma^0_1$, $d$-$\Sigma^0_1$, $\Sigma^0_n$, or $\Pi^1_n$). A numbering $\nu$ of a family $\mathcal{S}$ is \emph{$\Gamma$-computable} if the set $G_{\nu}$ from (\ref{equ:g-nu}) belongs to the class $\Gamma$. We say that a family $\mathcal{S}$ is \emph{$\Gamma$-computable} if it has a $\Gamma$-computable numbering. Note that the classical notion of a computable numbering becomes a synonym for a $\Sigma^0_1$-computable numbering.

In a similar way to computable families, one can define the Rogers semilattice $\mathcal{R}_{\Gamma}(\mathcal{S})$ for a $\Gamma$-computable family $\mathcal{S}$, see Section~\ref{subsect:numb} for formal details.

We follow the approach of Goncharov and Sorbi, and study the following problem:
\begin{problem}\label{prob:01}
	Let $\Gamma$ be a class of the analytical hierarchy, i.e. $\Gamma \in \{ \Sigma^1_n, \Pi^1_n\,\colon$ $n\geq 1\}$. Study the elementary theories of Rogers semilattices for $\Gamma$-com\-pu\-ta\-ble families.
\end{problem}

The current paper is a continuation of studies developed in Refs.~\cite{Dor-16,BOY}. These papers concentrated on Rogers semilattices of $\Pi^1_n$-computable families. Dorzhieva~\cite{Dor-16} showed that for a $\Pi^1_n$-computable family $\mathcal{S}$, one of the following two conditions holds:
\begin{itemize}
	\item[(a)] either the Rogers semilattice $\mathcal{R}_{\Pi^1_n}(\mathcal{S})$ contains only one element,
	
	\item[(b)] or the first-order theory $Th(\mathcal{R}_{\Pi^1_n}(\mathcal{S}))$ is hereditarily undecidable.
\end{itemize}
The article~\cite{BOY} proves the following: if the semilattice $\mathcal{R}_{\Pi^1_n}(\mathcal{S})$ contains more than one element, then for any non-zero $m\neq n$ and any $\Pi^1_m$-com\-pu\-ta\-b\-le family $\mathcal{T}$, the structure $\mathcal{R}_{\Pi^1_n}(\mathcal{S})$ is not isomorphic to $\mathcal{R}_{\Pi^1_m}(\mathcal{T})$. Further related work is discussed in Section~\ref{sec:related}.

The paper~\cite{BOY} left the following problem open:
\begin{problem}\label{prob:02}
	Let $n$ be a non-zero natural number. Consider Rogers semilattices $\mathcal{R}_{\Pi^1_n}(\mathcal{S})$ for $\Pi^1_n$-com\-pu\-ta\-ble families $\mathcal{S}$. How many isomorphism types do these semilattices realize?
\end{problem}

While attacking Problem~\ref{prob:02}, we observed the following: In order to apply the known numbering-theoretic techniques in this setting, one \emph{needs} to employ additional set-theoretic assumptions.

This observation motivated us to organize our paper as follows: We prove a number of results concerning Problem~\ref{prob:01} under the assumption of \emph{Projective Determinacy} (\textbf{PD}, see Section~\ref{sect:PD} for the details). Why did we choose \textbf{PD} as an additional axiom? Tanaka~\cite{Tanaka} already initiated a systematic development of recursion theory for the levels of analytical hierarchy, under the assumption of \textbf{PD}. We found his approach well-suited to our goals.

In order to make our paper accessible to both numbering-theoretic and set-theoretic communities, we tried to make the exposition as self-contained as possible.

The structure of the paper is as follows. Section~\ref{sec:prelim} contains the necessary preliminaries on the theory of numberings. Section~\ref{sec:analyt} discusses the background on the analytical hierarchy. In particular, Section~\ref{sect:PD} introduces consequences of \textbf{PD}, which will be employed in our proofs. It is well-known that under \textbf{PD}, the levels of analytical hierarchy exhibit ``flip-flopping'' behavior: kindred levels are $\Pi^1_1$, $\Sigma^1_2$, $\Pi^1_3$, $\Sigma^1_4$, \dots (see, e.g., Refs.~\cite{Jech,Moschovakis}). Thus, following Refs.~\cite{Add-Mos-68,Tanaka}, we use the following conventions:
\begin{itemize}
	\item For a number $k\in\omega$, $E^1_{2k+1}:= \Pi^1_{2k+1}$ and $E^1_{2k+2}:= \Sigma^1_{2k+2}$.
	
	\item For a non-zero $n\in\omega$, we consider $E^1_n$-computable families $\mathcal{S}$. By $\mathcal{R}^1_n(\mathcal{S})$ we denote the Rogers semilattice $\mathcal{R}_{E^1_n}(\mathcal{S})$.
\end{itemize}

Section~\ref{sect:finite_fam} studies elementary properties of the semilattices $\mathcal{R}^1_n(\mathcal{S})$ for \emph{finite} families $\mathcal{S}$.
Any such $\mathcal{R}^1_n(\mathcal{S})$ is a distributive upper semilattice (Proposition~\ref{prop:distr}). Note that the proof of this fact does not require \textbf{PD}. While assuming \textbf{PD}, we obtain a criterion for when $\mathcal{R}^1_n(\mathcal{S})$ has the greatest element (Theorem~\ref{theo:greatest}). We also show that if $\mathcal{R}^1_n(\mathcal{S})$ has no greatest element, then it is upwards dense (Theorem~\ref{theo:min-cover}).

Section~\ref{sect:inf_fam} discusses first-order properties of $\mathcal{R}^1_n(\mathcal{S})$ for \emph{infinite} families $\mathcal{S}$. We make use of Dorzhieva's results from Ref.~\cite{Dor-16} to establish the following:
\begin{enumerate}
	\item For an infinite $E^1_n$-computable family $\mathcal{S}$, the upper semilattice $\mathcal{R}^1_n(\mathcal{S})$ is not weakly distributive (Theorem~\ref{theo:not-weak-distr}). Consequently, $\mathcal{R}^1_n(\mathcal{S})$ is not distributive.
	
	\item For an \emph{arbitrary} $E^1_n$-computable family $\mathcal{S}$ which contains at least two elements, the semilattice $\mathcal{R}^1_n(\mathcal{S})$ is infinite, and it is not a lattice (Corollary~\ref{cor:Dor}).
\end{enumerate}

Summarizing, our results (together with Lemma~\ref{lem:dual} below) provide a first step to the solution of Problem~\ref{prob:02}~--- now we know that under \textbf{PD}, there are at least four different isomorphism types of Rogers $E^1_n$-semilattices $\mathcal{R}^1_n(\mathcal{S})$, induced via the following families:
\begin{enumerate}
	\item A one-element family $\mathcal{S}$~--- in this case, the structure $\mathcal{R}^1_n(\mathcal{S})$ is also one-element.
	
	\item A finite family $\mathcal{S}$, containing more than one element and possessing the $\subseteq$-least element.
	
	\item A finite family $\mathcal{S}$ without the $\subseteq$-least element.
	
	\item An infinite $E^1_n$-computable family $\mathcal{S}$.
\end{enumerate}

The last section discusses further problems. In particular, we consider the following question: What happens to our results, if we replace \textbf{PD} with the Axiom of Constructibility ($V=L$)?

Recall that the \emph{Axiom of Dependent Choices} (\textbf{DC}) states the following: For any non-empty set $A$ and any set of pairs $P\subseteq A\times A$, we have:
\[
	(\forall x \in A)(\exists y\in A)P(x,y) \ \Rightarrow\ (\exists f\colon \omega \to A)(\forall n) P(f(n), f(n+1)).
\]
Throughout the paper, we work in set theory \textbf{ZF}$+$\textbf{DC}.


\section{Preliminaries} \label{sec:prelim}

Lower-case letters $x,y,z,\dots$ denote variables that range over $\omega$. Capital letters $X,Y,Z,\dots$ are used for subsets of $\omega$.

By $\leq_{\omega}$ we denote the standard ordering of natural numbers. Recall that $\omega^{\omega}$ is the set of all total functions acting from $\omega$ to $\omega$.

As per usual, $\langle \cdot,\cdot\rangle$ is a standard pairing function over $\omega$. By $\langle \cdot\rangle_0$ and $\langle \cdot\rangle_1$ we denote computable functions such that for every $n\in\omega$, we have $\langle \langle n\rangle_0, \langle n\rangle_1\rangle = n$.

We treat upper semilattices as structures in the language $L_{usl} = \{ \leq, \vee\}$.


\subsection{Numberings} \label{subsect:numb}

Suppose that $\nu$ is a numbering of a family $\mathcal{S}_0$, and $\mu$ is a numbering of a family $\mathcal{S}_1$. Notice that the condition $\nu \leq \mu$ always implies that $\mathcal{S}_0 \subseteq \mathcal{S}_1$.

Numberings $\nu$ and $\mu$ are \emph{equivalent}, denoted by $\nu\equiv \mu$, if $\nu\leq \mu$ and $\mu \leq \nu$. The numbering $\nu\oplus\mu$ of the family $\mathcal{S}_0\cup\mathcal{S}_1$ is defined as follows:
\[
	(\nu \oplus \mu)(2x) = \nu(x), \quad (\nu \oplus \mu)(2x+1) = \mu(x).
\]
The following fact is well-known (see, e.g., p.~36 in Ref.~\cite{Ershov-Book}): If $\xi$ is a numbering of a family $\mathcal{T}$, then
\[
	(\nu \leq  \xi \,\&\, \mu \leq  \xi) \ \Leftrightarrow\ (\nu\oplus \mu \leq  \xi).
\]
For further background on numberings, the reader is referred to, e.g., Refs.~\cite{Ershov-Book,Ershov-99,BG-00,Ershov-1,Ershov-2,Ershov-3}.

Let $\Gamma$ be a complexity class with the following properties:
\begin{itemize}
	\item[(a)] If $\nu$ is a $\Gamma$-computable numbering and $\mu$ is a numbering such that $\mu \leq \nu$, then $\mu$ is $\Gamma$-computable.
	
	\item[(b)] If numberings $\nu$ and $\mu$ are both $\Gamma$-computable, then the numbering $\nu\oplus \mu$ is also $\Gamma$-computable.
\end{itemize}
Note that it is not hard to show that for a non-zero natural number $n$, each of the classes $\Sigma^0_n$, $\Sigma^{-1}_n$, and $\Pi^1_n$ has these properties.

Let $\mathcal{S}$ be a $\Gamma$-computable family. By $Com_{\Gamma}(\mathcal{S})$ we denote the set of all $\Gamma$-computable numberings of $\mathcal{S}$. Since the relation $\equiv$ is a congruence on the structure $(Com_{\Gamma}(\mathcal{S}); \leq, \oplus)$, we use the same symbols $\leq$ and $\oplus$ on numberings and on their $\equiv$-equivalence classes.

The quotient structure $\mathcal{R}_{\Gamma}(\mathcal{S}) := (Com_{\Gamma}(\mathcal{S}) /_{\displaystyle{\equiv}}; \leq, \oplus)$ is an upper semilattice. We say that $\mathcal{R}_{\Gamma}(\mathcal{S})$ is the \emph{Rogers semilattice} of the $\Gamma$-computable family $\mathcal{S}$.

\subsection{Related work} \label{sec:related}

There is a large body of literature which studies a counterpart of Problem~\ref{prob:01} in the setting of the arithmetical hierarchy. For the sake of brevity, here we use the term \emph{Rogers $\Sigma^0_n$-semilattice} as a synonym for ``the Rogers semilattice of a $\Sigma^0_n$-computable family.''

Ershov and Lavrov~\cite{EL-73} (see also p.~72 in Ref.~\cite{Ershov-Book}, and Ref.~\cite{Ershov-03}) showed that there are finite families $\mathcal{S}_i$, $i\in\omega$, of c.e. sets such that the semilattices $\mathcal{R}_{\Sigma^0_1}(\mathcal{S}_i)$ are pairwise non-isomorphic. In other words, there are infinitely many isomorphism types of Rogers $\Sigma^0_1$-semilattices. V'yugin~\cite{V'yugin} proved that there are infinitely many pairwise elementarily non-equiva\-l\-ent Rogers $\Sigma^0_1$-semilattices. Badaev, Goncharov, and Sorbi~\cite{BGS-05} proved that for any natural number $n\geq 2$, there are infinitely many pairwise elementarily non-equivalent Rogers $\Sigma^0_n$-semilattices. The reader is referred to, e.g., Refs.~\cite{BGPS-03,BGS-06,BG-08,BMY,Podz} for further results on Rogers $\Sigma^0_n$-semilattices.

Recall that a numbering $\nu$ is \emph{Friedberg} if $\nu(k)\neq \nu(m)$ for all $k\neq m$. Dorzhieva~\cite{Dor-14,Dor-18} studied Friedberg numberings for families of sets in the analytical hierarchy.

Kalimullin, Puzarenko, and Faizrakhmanov~\cite{KPF-18,KPF-19} considered \emph{computable $\Pi^1_1$-numberings}. A \emph{$\Pi^1_1$-numbering} of a family $\mathcal{S}$ is a \emph{partial} map $\nu$ acting from $\omega$ onto $\mathcal{S}$ such that the domain of $\nu$ is enumeration reducible to the $\Pi^1_1$-complete set $\mathcal{O}$. A $\Pi^1_1$-numbering $\nu$ is \emph{computable} if the set
\[
	G^{\ast}_{\nu} = \{ \langle n,x\rangle \,\colon n\in dom(\nu),\ x\in \nu(n)\}
\]
is enumeration reducible to $\mathcal{O}$.


\section{Background on the analytical hierarchy} \label{sec:analyt}

Here we discuss known results on the analytical hierarchy, which will be employed in our proofs. Furthermore, the section includes proofs of several useful (but a little bit technical) results on numberings: Lemma~\ref{lem:dual}, Proposition~\ref{prop:univ-for-all}, and Lemma~\ref{lem:main-prop}.

Recall that a predicate $R(x_1,\dots,x_m;f_1,\dots,f_n)$, where the variables $f_i$ denote elements from $\omega^{\omega}$, is \emph{recursive} if there is an index $e\in\omega$ such that for all $f_1,\dots,f_n$ and $x_1,\dots,x_m$, the following conditions hold:
\begin{itemize}
	\item[(a)] the value $\varphi_e^{f_1\oplus \dots\oplus f_n}(x_1,\dots,x_m)$ is defined;
	
	\item[(b)] the predicate $R(x_1,\dots,x_m; f_1,\dots,f_n)$ is true if and only if 
	\[
		\varphi_e^{f_1\oplus \dots\oplus f_n}(x_1,\dots,x_m) = 1.
	\]
\end{itemize}
If an index $e$ satisfies these conditions, then we say that $e$ \emph{witnesses the recursiveness} of the predicate $R$.

We follow Ref.~\cite{Sacks} and use the following version of a \emph{normal form} for analytical subsets of $\omega$: Let $n$ be a non-zero natural number. A set $X\subseteq\omega^m$ is $\Pi^1_n$ if and only if there is a recursive predicate $R(x_1,\dots,x_m,y;f_1,\dots,f_n)$ such that for all $\bar a\in\omega^m$, we have
\begin{equation}\label{equ:normal_form}
	\bar a \in X\ \Leftrightarrow\ (\forall f_1) (\exists f_2) (\forall f_3) \dots (Qf_n) (\overline{Q}y) R(\bar a,y; f_1,\dots,f_n),
\end{equation}
where the last quantifiers are as follows:
\[
	Q = \begin{cases}
		\forall, & \text{if } n \text{ is odd},\\
		\exists, & \text{if } n \text{ is even};
	\end{cases}
	\quad
	\overline{Q} = \begin{cases}
		\exists, & \text{if } Q=\forall,\\
		\forall, & \text{if } Q=\exists.
	\end{cases}
\]

Mutatis mutandis, a $\Sigma^1_n$ set $X \subseteq \omega^m$ can be represented via the following form:
\[
	\bar a \in X\ \Leftrightarrow\ (\exists f_1) (\forall f_2) (\exists f_3) \dots (Qf_n) (\overline{Q}y) R(\bar a,y; f_1,\dots,f_n).
\]

Let $\Gamma \in \{ \Sigma^1_n, \Pi^1_n\,\colon n \geq 1\}$. By $\breve{\Gamma}$ we denote the dual class:
\[
	\breve{\Gamma} = \begin{cases}
		\Pi^1_n, & \text{if } \Gamma = \Sigma^1_n,\\
		\Sigma^1_n, & \text{if } \Gamma = \Pi^1_n.
	\end{cases}
\]

The next lemma will be useful for transferring various results from a class $\Gamma$ into its dual $\breve{\Gamma}$.

\begin{definition}
	Suppose that $\mathcal{S}$ is a countable family of subsets of $\omega$. By $Dual(\mathcal{S})$ we denote the following family:
	\[
		Dual(\mathcal{S}) := \{ A \,\colon\ \omega\setminus A \in \mathcal{S}\}.
	\]
\end{definition}

\begin{lemma}\label{lem:dual}
	Suppose that $\Gamma \in \{ \Sigma^1_n, \Pi^1_n\}$. Then the operator
	$\mathrm{Dual}\colon \mathcal{S} \mapsto Dual(\mathcal{S})$
	is a bijection from
	\begin{gather*}
		\{ \mathcal{S}\subset P(\omega)\,\colon \mathcal{S} \text{ is } \Gamma \text{-computable}\}\
		\text{onto } \{ \mathcal{T}\subset P(\omega)\,\colon \mathcal{T} \text{ is } \breve{\Gamma} \text{-computable}\}.
	\end{gather*}
	Furthermore, the semilattices $\mathcal{R}_{\Gamma}(\mathcal{S})$ and $\mathcal{R}_{\breve{\Gamma}}(Dual(\mathcal{S}))$ are isomorphic.
\end{lemma}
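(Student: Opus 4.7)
The plan is to use pointwise complementation on numberings as the canonical operator that simultaneously witnesses the bijection on families and the semilattice isomorphism. For every numbering $\nu$ of a family $\mathcal{S}\subset P(\omega)$, I would introduce the \emph{complementary} numbering $\nu^{c}$ of $Dual(\mathcal{S})$ defined by $\nu^{c}(n) := \omega \setminus \nu(n)$. Since the pairing function is a bijection from $\omega\times\omega$ onto $\omega$, the set $G_{\nu^{c}} = \{\langle n,x\rangle : x\notin\nu(n)\}$ is exactly $\omega\setminus G_{\nu}$ as a subset of $\omega$. Hence $G_{\nu}\in\Gamma$ if and only if $G_{\nu^{c}}\in\breve{\Gamma}$, i.e.\ $\nu$ is $\Gamma$-computable if and only if $\nu^{c}$ is $\breve{\Gamma}$-computable.

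Since the operator $Dual$ is an involution on families ($Dual(Dual(\mathcal{S})) = \mathcal{S}$), this already yields the bijection claim: any $\Gamma$-computable numbering $\nu$ of $\mathcal{S}$ produces a $\breve{\Gamma}$-computable numbering $\nu^{c}$ of $Dual(\mathcal{S})$, and a $\breve{\Gamma}$-computable numbering $\xi$ of a $\breve{\Gamma}$-computable family $\mathcal{T}$ produces, by the symmetric argument, a $\Gamma$-computable numbering $\xi^{c}$ of $Dual(\mathcal{T})$.

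For the isomorphism, I would define $\Phi\colon \mathcal{R}_{\Gamma}(\mathcal{S}) \to \mathcal{R}_{\breve{\Gamma}}(Dual(\mathcal{S}))$ by $\Phi([\nu]_{\equiv}) := [\nu^{c}]_{\equiv}$, and verify three routine properties. First, any total computable $f$ satisfying $\nu(n) = \mu(f(n))$ also satisfies $\nu^{c}(n) = \omega\setminus\mu(f(n)) = \mu^{c}(f(n))$, so reducibility is preserved in both directions (applying the same observation to $\nu^{c},\mu^{c}$ gives the converse); in particular $\Phi$ is well-defined, injective and an order isomorphism onto its image. Second, directly unfolding the definition of $\oplus$ on even and odd indices gives $(\nu\oplus\mu)^{c} = \nu^{c}\oplus\mu^{c}$ literally as numberings, so the join is preserved. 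Third, surjectivity is immediate from $\xi = (\xi^{c})^{c}$, since any $\breve{\Gamma}$-computable numbering $\xi$ of $Dual(\mathcal{S})$ is the image under $\Phi$ of the class of the $\Gamma$-computable numbering $\xi^{c}$ of $\mathcal{S}$.

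There is no substantive obstacle in this argument; the entire proof is a direct unfolding of definitions. The only point that deserves to be made explicit is the identification of $\omega\times\omega$ with $\omega$ via the pairing function, which is what lets us interpret $G_{\nu^{c}}$ as the complement of $G_{\nu}$ in $\omega$ and thereby invoke the fact that the analytical classes $\Gamma$ and $\breve{\Gamma}$ are interchanged under complementation.
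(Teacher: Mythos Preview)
Your proposal is correct and follows essentially the same approach as the paper: both use the pointwise complementation $\nu \mapsto (n \mapsto \omega\setminus\nu(n))$ (which the paper denotes $\nu^{Dual}$), observe that $G_{\nu^{Dual}}$ is the complement of $G_{\nu}$, and use the involution $Dual(Dual(\mathcal{S}))=\mathcal{S}$ together with preservation of reducibility to conclude. Your write-up is in fact somewhat more detailed than the paper's proof sketch, explicitly verifying preservation of $\oplus$ and surjectivity of $\Phi$.
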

\begin{proof}[Proof Sketch]
	Given a $\Gamma$-computable numbering $\nu$ of a family $\mathcal{S}$, we define a numbering $\nu^{Dual}$ of the family $Dual(\mathcal{S})$ as follows: for $k\in\omega$,
	\[
		\nu^{Dual}(k) := \omega \setminus \nu(k).
	\]
	Clearly, the set $G_{\nu^{Dual}}$ is the complement of $G_{\nu}\in \Gamma$, and hence, the numbering $\nu^{Dual}$ is $\breve{\Gamma}$-computable. Furthermore, it is not hard to see that for arbitrary numberings $\nu$ and $\mu$, we have:
	\[
		\nu \leq \mu\ \Leftrightarrow\ \nu^{Dual} \leq \mu^{Dual}.
	\]
	This implies that the structures $\mathcal{R}_{\Gamma}(\mathcal{S})$ and $\mathcal{R}_{\breve{\Gamma}}(Dual(\mathcal{S}))$ are isomorphic.
	
	Note the following: if $\mathcal{S}\neq \mathcal{S}^{\ast}$, then $Dual(\mathcal{S}) \neq Dual(\mathcal{S}^{\ast})$. Moreover, $Dual(Dual(\mathcal{S}))=\mathcal{S}$. These facts are enough to finish the proof.
\end{proof}


\subsection{Universal numberings}

Here we give a brief discussion on universal numberings in the analytical hierarchy. We emphasize that the results of this subsection \emph{do not rely on} additional set-theoretic assumptions.

Let $n$ be a non-zero natural number, and let $\Gamma \in \{ \Pi^1_n, \Sigma^1_n\}$. We say that a $\Gamma$-computable numbering $\nu$ of a family $\mathcal{S}$ is \emph{universal} if $\nu$ induces the greatest element in the semilattice $\mathcal{R}_{\Gamma}(\mathcal{S})$.

\begin{proposition}[Kleene, see XIX in Ref.~\cite{Kleene-55}] \label{prop:univ-for-all}
	Let $\mathcal{S}$ be the family of all\ $\Gamma$ sets.
	There is a universal $\Gamma$-computable numbering of $\mathcal{S}$.
\end{proposition}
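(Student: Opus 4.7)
The plan is to construct the universal numbering directly from Kleene's normal form~(\ref{equ:normal_form}). I describe the case $\Gamma = \Pi^1_n$; the $\Sigma^1_n$ case follows symmetrically (or via Lemma~\ref{lem:dual}, since $\mathrm{Dual}$ carries the family of all $\Pi^1_n$ subsets of $\omega$ bijectively onto the family of all $\Sigma^1_n$ sets, and the existence of a greatest element is preserved under isomorphism of Rogers semilattices).

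First, using Kleene's enumeration of partial recursive functionals, I fix a \emph{universal recursive predicate} $R^*(e, a, y; f_1, \dots, f_n)$: a single recursive predicate in the variables $(e, a, y, \bar f)$ with the property that every recursive predicate $R(a, y; \bar f)$ coincides with $R^*(e, a, y; \bar f)$ for some $e \in \omega$. Using $R^*$, I define the desired numbering $\nu$ of the family $\mathcal{S}$ of all $\Pi^1_n$ subsets of $\omega$ by
\[
    \nu(e) := \{\, a \in \omega : (\forall f_1)(\exists f_2)(\forall f_3) \dots (Q f_n)(\overline{Q} y)\, R^*(e, a, y; f_1, \dots, f_n) \,\}.
\]
The set $G_\nu = \{\langle e, a\rangle : a \in \nu(e)\}$ is then defined by a formula already in normal form~(\ref{equ:normal_form}) with recursive matrix $R^*$ (viewing $e$ as a free number variable on the left), so $G_\nu \in \Pi^1_n$ and $\nu$ is $\Pi^1_n$-computable. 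Conversely, by the normal form theorem every $\Pi^1_n$ subset of $\omega$ is realised as $\nu(e)$ for some $e$, so $\nu$ is genuinely a numbering of all of $\mathcal{S}$.

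For universality, let $\mu$ be an arbitrary $\Pi^1_n$-computable numbering of $\mathcal{S}$. Applying the normal form to $G_\mu \in \Pi^1_n$ produces a recursive predicate $P(k, x, y; \bar f)$ with
\[
    \langle k, x\rangle \in G_\mu\ \Leftrightarrow\ (\forall f_1)(\exists f_2) \dots (Q f_n)(\overline{Q} y)\, P(k, x, y; \bar f).
\]
For each fixed $k$, the sliced predicate $(x, y, \bar f) \mapsto P(k, x, y; \bar f)$ is recursive, and by an s-m-n argument applied to the index of $P$ one extracts a total computable $h \colon \omega \to \omega$ with $R^*(h(k), x, y; \bar f) \Leftrightarrow P(k, x, y; \bar f)$ for all $x, y, \bar f$. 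Then $\mu(k) = \nu(h(k))$ for every $k$, witnessing $\mu \leq \nu$, so $\nu$ is universal and induces the greatest degree in $\mathcal{R}_{\Pi^1_n}(\mathcal{S})$. The only mild technical subtlety I expect is packaging Kleene's enumeration of partial recursive functionals into one \emph{total} universal recursive predicate $R^*$: since not every index $e$ witnesses a total $\{0,1\}$-valued functional, a standard clocking-and-default convention is needed to keep $R^*$ itself recursive in all its variables, but this setup is routine and goes back to Kleene~\cite{Kleene-55}.
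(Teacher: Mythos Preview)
Your proposal is correct and follows essentially the same route as the paper: define $\nu$ by plugging a universal recursive predicate into the normal form~(\ref{equ:normal_form}), then use a relativized $s$-$m$-$n$ argument to reduce any $\Pi^1_n$-computable $\mu$ to $\nu$. The only cosmetic difference is that the paper spells out the ``clocking-and-default'' construction of the universal predicate explicitly (via $\varphi^{\vec F}_{e,\langle s\rangle_0}(x,\langle s\rangle_1)$, with a case split on the parity of $n$ to handle divergent computations), whereas you package this as a black-box $R^{\ast}$ and flag the totality issue at the end.
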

\begin{proof}
	Here we give a formal proof of the fact, so that a reader could get familiar with the techniques. We discuss only the case when $\Gamma = \Pi^1_n$. The proof for the $\Sigma^1_n$ case can be obtained in a similar way, mutatis mutandis.

	For a natural number $e$, set
	\[
		x\in \nu(e) \ \Leftrightarrow\ (\forall f_1) (\exists f_2) (\forall f_3) \dots (Qf_n) (\overline{Q} s) U(x,s;f_1,f_2,\dots,f_n),
	\]
	where a recursive predicate $U$ is defined as follows:
	\begin{itemize}
		\item If $n$ is odd, then $U(x,s; f_1,\dots,f_n)$ holds if and only if 
		\[
			\varphi^{f_1\oplus \dots \oplus f_n}_{e,\langle s\rangle_0}(x,\langle s\rangle_1) = 1.
		\]
		
		\item If $n$ is even, then $U(x,s; f_1,\dots,f_n)$ is true if and only if $\varphi^{f_1\oplus \dots \oplus f_n}_{e,\langle s\rangle_0}(x,\langle s\rangle_1)$ is either undefined or equal to one.
	\end{itemize}
	Clearly, the set $G_{\nu}$ is $\Pi^1_n$ and hence, the numbering $\nu$ is $\Pi^1_n$-computable.
	
	Let $\vec{F}$ denote an oracle $f_1\oplus \dots \oplus f_n$, where each $f_i$ belongs to $\omega^{\omega}$. Note the following key property of the relation $U$: for any oracle $\vec{F}$ and any number $x$, we have:
	\begin{itemize}
		\item If $n$ is odd, then the condition $\exists s U (x,s;f_1,\dots,f_n)$ is equivalent to $\exists y (\varphi^{\vec{F}}_{e}(x,y)\downarrow\ = 1)$.
		
		\item If $n$ is even, then $\forall s U (x,s;f_1,\dots,f_n)$ holds iff for every $y$, either $\varphi^{\vec{F}}_{e}(x,y)\uparrow$ or $\varphi^{\vec{F}}_{e}(x,y)\downarrow\ = 1$.
	\end{itemize}
	
	Let $X\subseteq \omega$ be an arbitrary $\Pi^1_n$ set. Choose a normal form from (\ref{equ:normal_form}) for the set $X$, and fix a number $i_0$ witnessing the recursiveness of $R$ from (\ref{equ:normal_form}). Then the key property of $U$ implies that $\nu(i_0) = X$. Hence, we deduce that $\nu$ is a numbering of the family of \emph{all} $\Pi^1_n$ sets.
	
	Now let $\mu$ be an arbitrary $\Pi^1_n$-computable numbering of $\mathcal{S}$. We need to show that $\mu \leq \nu$. Fix an index $j_0$ witnessing the recursiveness of the set $G_{\mu}\subseteq \omega$. Recall that this implies the following: $x\in  \mu(n)$ iff
	\[
		(\forall f_1) (\exists f_2) (\forall f_3) \dots (Qf_n) (\overline{Q} y) (\varphi^{\vec{F}}_{j_0}(\langle n,x\rangle,y ) = 1).
	\]
	By a relativized $s$-$m$-$n$ Theorem, there is a computable function $g(u,v)$ such that
	\[
		(\forall \vec{F})(\forall u) (\forall n) (\forall x) (\forall y) [ \varphi^{\vec{F}}_{u}(\langle n,x\rangle, y) =  \varphi^{\vec{F}}_{g(u,n)}(x, y)].
	\]
	Thus, the key property of the predicate $U$ implies that $\mu(n) = \nu(g(j_0,n))$ for all $n$. Hence, $\mu$ is reducible to $\nu$.
Proposition~\ref{prop:univ-for-all} is proved.
\end{proof}

Notice that in the proof above, we never use the fact that the numbering $\mu$ gives indices for \emph{all} $\Pi^1_n$ sets. Therefore, we deduce the following:
\begin{corollary}\label{coroll:univ}
	Let $\Gamma \in \{ \Pi^1_n, \Sigma^1_n\}$. There is a $\Gamma$-computable numbering $\nu_U$ such that an arbitrary $\Gamma$-computable numbering $\mu$ is reducible to $\nu_U$.
\end{corollary}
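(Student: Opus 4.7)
The plan is to take $\nu_U$ to be precisely the numbering $\nu$ constructed in the proof of Proposition~\ref{prop:univ-for-all} (for the case $\Gamma=\Pi^1_n$; the $\Sigma^1_n$ case is symmetric, or can be obtained via Lemma~\ref{lem:dual}). That numbering was shown to be $\Gamma$-computable because its graph $G_{\nu_U}$ was put into the normal form~(\ref{equ:normal_form}) with the recursive predicate $U$, and this construction does not depend on what family one is trying to number.

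Next, I would re-examine the reduction argument at the end of the proof of Proposition~\ref{prop:univ-for-all} to check that it makes no use of surjectivity onto the family of all $\Gamma$ sets. Given an arbitrary $\Gamma$-computable numbering $\mu$ (of any family $\mathcal{T}\subseteq P(\omega)$), one fixes an index $j_0$ witnessing that $G_{\mu}$ is $\Gamma$, writes out the resulting normal form for the condition $x\in\mu(n)$, and then applies the relativized $s$-$m$-$n$ theorem to obtain a computable function $g$ with
\[
    \varphi^{\vec{F}}_{j_0}(\langle n,x\rangle, y) = \varphi^{\vec{F}}_{g(j_0,n)}(x,y)
\]
uniformly in all arguments. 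The key property of $U$ isolated in the proof of Proposition~\ref{prop:univ-for-all} then yields $\mu(n)=\nu_U(g(j_0,n))$ for every $n$, so the total computable function $n\mapsto g(j_0,n)$ reduces $\mu$ to $\nu_U$. At no point does this argument require that the range of $\mu$ exhaust the $\Gamma$ subsets of $\omega$; all that is used is the $\Gamma$-computability of $\mu$, i.e.\ the existence of $j_0$.

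There is essentially no obstacle: the corollary is a direct reading of the proof of Proposition~\ref{prop:univ-for-all}, as the remark preceding its statement already indicates. The only minor point to note is that $\nu_U$ itself is a numbering of the family of \emph{all} $\Gamma$ sets (so in particular the family numbered by $\mu$ is a subset of the range of $\nu_U$, which is consistent with the convention that $\mu\leq\nu_U$ forces the range inclusion). I would therefore present the proof as a one-paragraph observation: set $\nu_U:=\nu$ from Proposition~\ref{prop:univ-for-all}, then quote verbatim the reduction portion of its proof, pointing out that this portion treats $\mu$ as an arbitrary $\Gamma$-computable numbering.
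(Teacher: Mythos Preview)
Your proposal is correct and matches the paper's approach exactly: the paper does not even give a separate proof, but simply observes (in the sentence preceding the corollary) that the reduction argument in Proposition~\ref{prop:univ-for-all} never uses that $\mu$ indexes all $\Gamma$ sets, and hence the same $\nu$ works as $\nu_U$. Your added remark about range inclusion is a helpful sanity check but not needed for the argument.
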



\subsection{Projective determinacy and its consequences} \label{sect:PD}

Tanaka~\cite{Tanaka} developed recursion theory for subsets of $\omega$, belonging to the levels of analytical hierarchy, under the assumption of \emph{Projective Determinacy} (\textbf{PD}). Here we follow Tanaka's approach: this subsection gives a brief overview of some consequences of \textbf{PD}, which will be heavily used in the proofs of our results.

First, we discuss the necessary set-theoretic background, our exposition mainly follows Refs.~\cite{Jech,Moschovakis}.

With each set $A \subseteq \omega^{\omega}$, one associates a two-person game $G = G(A)$ as follows. Players~I and~II alternatively choose natural numbers: player~I chooses $a_0$, then II chooses $b_0$, then I chooses $a_1$, then II chooses $b_1$, and so on. The game $G$ ends after $\omega$ steps. If the resulting sequence
\[
	f := (a_0,b_0,a_1,b_1,\dots)
\]
belongs to $A$, then player I wins. If $f\not\in A$, then II wins.

The game $G$ is a game of perfect information: before I choose $a_{n+1}$, she is allowed to see the tuple $(a_0,b_0,\dots, a_n,b_n)$; and similarly with II. A \emph{strategy} for the player I is a function $\sigma$, which maps tuples of even length to natural numbers. A strategy $\sigma$ is \emph{winning} if I always wins by following $\sigma$. In a similar way, one introduces the notion of a winning strategy for player II. The game $G(A)$ is \emph{determined} if one of the players has a winning strategy.

The \emph{Axiom of Determinacy} (\textbf{AD}) states that for every $A\subseteq \omega^{\omega}$, the game $G(A)$ is determined.

Now we recall the definition of \emph{projective hierarchy} for a Polish space $\mathcal{X}$. Recall that \emph{Baire space} is the set $\omega^{\omega}$, endowed with the natural topology.

A subset $A \subseteq \mathcal{X}$ is \emph{analytic} if either $A=\emptyset$, or there is a continuous map $f\colon \omega^{\omega} \to \mathcal{X}$ such that $range(f) = A$.

For a non-zero natural number $n$, the boldface classes $\mathbf{\Sigma}^1_n$ and $\mathbf{\Pi}^1_n$ (for the space $\mathcal{X}$) can be introduced as follows:
\begin{itemize}
	\item A set $A\subseteq \mathcal{X}$ is $\mathbf{\Sigma}^1_1$ if $A$ is analytic.
	
	\item $A$ is $\mathbf{\Pi}^1_1$ if its complement $\mathcal{X}\setminus A$ is analytic.
	
	\item $A$ is $\mathbf{\Sigma}^1_{n+1}$ if there is a $\mathbf{\Pi}^1_n$ set $B \subseteq \mathcal{X} \times \omega^{\omega}$ such that
	\[
		A = \{ a\,\colon (\exists f \in \omega^{\omega})[ (a,f) \in B ] \}.
	\]
	
	\item $A$ is $\mathbf{\Pi}^1_{n+1}$ if its complement is $\mathbf{\Sigma}^1_{n+1}$.
\end{itemize}
A set $A\subseteq \mathcal{X}$ is \emph{projective} if $A$ belongs to $\bigcup_{1\leq n < \omega} \mathbf{\Sigma}^1_n = \bigcup_{1\leq n < \omega} \mathbf{\Pi}^1_n$.

The axiom of \emph{Projective Determinacy} (\textbf{PD}) states that for every projective set $A\subseteq \omega^{\omega}$, the game $G(A)$ is determined.

For a detailed discussion of \textbf{AD} and \textbf{PD}, the reader is referred to, e.g., Refs.~\cite{Jech,Moschovakis}.

We follow Refs.~\cite{Add-Mos-68,Tanaka} and use the following notations: for a natural number $k$,
\begin{itemize}
	\item $E^1_{2k+1}$ (pronounced ``Epsilon$^1_{2k+1}$'') is the (lightface) class $\Pi^1_{2k+1}$, and $\Upsilon^1_{2k+1}$ is the class $\Sigma^1_{2k+1}$;
	
	\item $E^1_{2k+2} = \Sigma^1_{2k+2}$ and $\Upsilon^1_{2k+2} = \Pi^1_{2k+2}$.
\end{itemize}


\subsubsection{The prewellordering property} \label{subsub:prewell}

A binary relation $\preceq$ on a set $S$ is a \emph{prewellordering} if $\preceq$ is:
\begin{itemize}
	\item transitive;
	
	\item reflexive;
	
	\item connected, i.e. $x\preceq y$ or $y\preceq x$ for all $x,y\in S$;
	
	\item wellfounded, i.e. $S$ does not contain infinite descending chains $x_0 \succ x_1 \succ x_2 \succ \dots$.
\end{itemize}

A \emph{norm} on a set $S$ is an arbitrary function which maps $S$ into the ordinals. Given a norm $\varphi$ on $S$, one can associate with $\varphi$ the following prewellordering:
\[
	x \preceq^{\varphi} y \ \Leftrightarrow\ \varphi(x) \leq_{Ord} \varphi(y),
\]
where $\leq_{Ord}$ is the standard order on ordinals.

Suppose that $S\subseteq \omega$, and $\varphi\colon S \to \lambda$ is a norm on $S$. The map $\varphi$ is called a \emph{$\Gamma$-norm} if there are binary relations $\preceq^{\varphi}_{\Gamma}$ and $\preceq^{\varphi}_{\breve{\Gamma}}$ on $\omega$ such that $\preceq^{\varphi}_{\Gamma}$ belongs to $\Gamma$, $\preceq^{\varphi}_{\breve{\Gamma}}$ belongs to $\breve{\Gamma}$, and for every $y\in\omega$,
\begin{gather}
	\text{if } y \in S, \text{ then for any } x\in\omega, \notag\\
	[x\in S \,\&\, \varphi(x) \leq_{Ord} \varphi(y)] \ \Leftrightarrow\ x \preceq^{\varphi}_{\Gamma} y \ \Leftrightarrow\  x\preceq^{\varphi}_{\breve{\Gamma}} y. \label{def:Gamma-norm}
\end{gather}

A class $\Gamma$ has the \emph{prewellordering property} (or $\Gamma$ is \emph{normed}) if every set $S\in \Gamma$ admits a $\Gamma$-norm.

The following result is a consequence of the Prewellordering Theorem of Ref.~\cite{Add-Mos-68}. For a more detailed discussion, see Ref.~\cite{Mos-71} and Corollary 6B.2 of Ref.~\cite{Moschovakis}.

\begin{theorem}[Addison and Moschovakis~\cite{Add-Mos-68}] \label{theo:prewellord}
	Assume \textbf{PD}. Let $n$ be a non-zero natural number. The class $E^1_n$ has the prewellordering property.
\end{theorem}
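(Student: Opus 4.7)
The plan is to proceed by induction on $n$, exploiting the alternation in the definition of $E^1_n$. The base case, the prewellordering property for $E^1_1 = \Pi^1_1$, is classical and does not require \textbf{PD}. Given a $\Pi^1_1$ set $A \subseteq \omega$, I would fix a recursive map $x \mapsto T_x$ assigning to each $x$ a tree on $\omega$ so that $x \in A$ if and only if $T_x$ is well-founded, and then take $\varphi(x) := \|T_x\|$, the ordinal rank of $T_x$. The two comparison relations $\preceq^{\varphi}_{\Pi^1_1}$ and $\preceq^{\varphi}_{\Sigma^1_1}$ are then obtained by encoding the existence (respectively, the forced existence when the target tree is already known to be well-founded) of rank-preserving order embeddings between the Kleene--Brouwer orderings associated with the trees $T_x$ and $T_y$, both of which have the required analytical complexities by standard arguments.

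For the inductive step, suppose $E^1_n$ has the prewellordering property. If $n$ is odd, then $E^1_{n+1} = \Sigma^1_{n+1} = \exists^{\omega^{\omega}} E^1_n$, and I would take the easy ``projection'' route: given $A \in \Sigma^1_{n+1}$ written as $A(x) \Leftrightarrow \exists f\, B(x,f)$ with $B \in E^1_n$ carrying a norm $\varphi$, set
\[
\psi(x) := \min\{\varphi(x,f) : B(x,f)\}.
\]
Both analytical forms of $\preceq^{\psi}$ can be obtained by quantifier-manipulation from the given forms of $\preceq^{\varphi}$, and no determinacy is required. If instead $n$ is even, then $E^1_{n+1} = \Pi^1_{n+1} = \forall^{\omega^{\omega}} E^1_n$, and this is where \textbf{PD} enters, via the game-theoretic argument underlying Moschovakis' First Periodicity Theorem.

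The heart of the argument, and the main obstacle, is this periodicity step. Given $A \in \Pi^1_{n+1}$ with $A(x) \Leftrightarrow \forall f\, B(x,f)$ where $B \in E^1_n$ has norm $\varphi$, I would associate to each ordered pair $x,y$ a two-player game in which Players I and II alternately produce reals $g$ and $f$ --- candidate counterexamples to $A(y)$ and $A(x)$ respectively --- with Player II winning precisely when $\varphi(x,f) \le \varphi(y,g)$ under the given prewellordering, with a suitable convention when one of the reals fails to witness $B$. Since $\preceq^{\varphi}$ admits both an $E^1_n$ and an $\Upsilon^1_n$ definition, the payoff is $\Delta^1_n$, so \textbf{PD} guarantees determinacy. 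Setting $x \leq^{\psi} y$ to hold exactly when Player II has a winning strategy then yields a norm on $A$; the $E^1_{n+1}$ form of $\preceq^{\psi}$ comes from the existence of a winning strategy for II, and the $\Upsilon^1_{n+1}$ form from the non-existence of a winning strategy for I (the two being equivalent by determinacy). The main technical challenge will be laying out the game so that the payoff sits exactly at $\Delta^1_n$ and verifying that both the $\exists$-strategy and $\neg\exists$-strategy expressions land, after folding in the quantifiers defining ``strategy,'' inside $E^1_{n+1}$ and $\Upsilon^1_{n+1}$ respectively. This closes the induction and establishes the theorem.
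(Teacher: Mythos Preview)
Your outline is a faithful sketch of the standard proof via Moschovakis' First Periodicity Theorem, and as a plan it is correct. However, there is nothing in the paper to compare it against: the paper does not prove Theorem~\ref{theo:prewellord} at all. It is stated as a quoted result of Addison and Moschovakis, with pointers to Ref.~\cite{Add-Mos-68}, Ref.~\cite{Mos-71}, and Corollary~6B.2 of Ref.~\cite{Moschovakis} for the details. Your inductive argument, with the easy existential-projection step transferring norms from $E^1_n$ to $\exists^{\omega^{\omega}} E^1_n$ and the game-theoretic periodicity step handling $\forall^{\omega^{\omega}} E^1_n$, is precisely the argument one finds in those references, so in that sense you are reconstructing exactly what the paper cites rather than diverging from it.

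One small point of care in carrying out the periodicity step: the payoff set of the comparison game lives in the boldface class $\boldsymbol{\Delta}^1_n$ (the parameters $x,y$ are fixed), not the lightface $\Delta^1_n$, and the norm relations $\preceq^{\varphi}_{\Gamma}$ and $\preceq^{\varphi}_{\breve{\Gamma}}$ are only guaranteed to agree when the right-hand argument lies in $B$, so the winning condition has to be phrased so that membership in $B$ is built into it in the right way. You already flag this with your ``suitable convention'' remark, but when you write the details, that is where the bookkeeping actually needs to be done. Under \textbf{PD} the determinacy of such games is of course immediate.
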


\begin{remark}
	For the classes $\Pi^1_1$ and $\Sigma^1_2$, this result holds without assuming \textbf{PD}. See Theorems~XXIII and XXXVIII of Chap.~16 in Ref.~\cite{Rogers-book}; Theorems~4B.2 and 4B.3 of Ref.~\cite{Moschovakis}.
\end{remark}

Given an arbitrary $E^1_n$-computable numbering $\nu\colon \omega \to P(\omega)$, we define a relation $\sqsubseteq^{\nu}\ \subseteq \omega^2 \times \omega^2$ as follows.

Since the set $G_{\nu} = \{ \langle k,x\rangle\,\colon x\in\nu(k)\}$ is $E^1_n$, by Theorem~\ref{theo:prewellord}, one can choose a $E^1_n$-norm $\varphi$ mapping $G_{\nu}$ into some ordinal $\lambda$. Fix binary relations $\preceq^{\varphi}_{\Gamma}$ and $\preceq^{\varphi}_{\breve{\Gamma}}$, where $\Gamma = E^1_n$, witnessing that $\varphi$ is a $E^1_n$-norm.

For natural numbers $k,m,x,y$, we say that $(k,x) \sqsubseteq^{\nu} (m,y)$ if
\begin{gather*}
	(k=m) \ \&\  [ (\langle k,x\rangle \preceq^{\varphi}_{\Gamma} \langle m,y\rangle  \,\&\, \langle m,y\rangle \not\preceq^{\varphi}_{\Gamma} \langle k,x\rangle)  \ \vee\\
	 (\langle k,x\rangle \preceq^{\varphi}_{\Gamma} \langle m,y\rangle  \,\&\, \langle m,y\rangle \preceq^{\varphi}_{\Gamma} \langle k,x\rangle \,\&\, x\leq_{\omega}y )].
\end{gather*}

Informally speaking, the result below introduces the ``building blocks'' of our constructions: The sets $\widehat{[x]}_{\nu(k)}$ allow us to transfer some results (already known for, say, $\Sigma^0_2$-computable numberings) into our setting.

\begin{lemma}[Main Property of $\sqsubseteq^{\nu}$] \label{lem:main-prop}
	Assume \textbf{PD}. Suppose that $\nu$ is a $E^1_n$-com\-pu\-ta\-ble numbering, and $k\in\omega$. Then:
	\begin{itemize}
		\item[(i)] The relation $\preceq^{\nu(k)}\ := \{ (x,y)\,\colon x,y \in \nu(k),\ (k,x) \sqsubseteq^{\nu} (k,y)\}$ is a wellordering on the set $\nu(k)$.
	
	 	\item[(ii)] For a number $x\in\nu(k)$, the set
		\[
			\widehat{[x]}_{\nu(k)} := \{  z\in \omega \,\colon (k,z) \sqsubseteq^{\nu} (k,x)\}
		\]
		is a $\Delta^1_n$ subset of $\nu(k)$. Furthermore, the formulas witnessing the $\Delta^1_n$-ness do not depend on the choice of $k$ and $x$.
	\end{itemize}
\end{lemma}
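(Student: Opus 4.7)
The driving observation is that, since $\varphi$ is a $E^{1}_{n}$-norm, both $\preceq^{\varphi}_{\Gamma}$ and $\preceq^{\varphi}_{\breve{\Gamma}}$ compute the same ordinal comparison whenever the right-hand argument lies in $G_{\nu}$. Consequently, for $x,y\in\nu(k)$ the relation $(k,x)\sqsubseteq^{\nu}(k,y)$ unfolds, via (\ref{def:Gamma-norm}), into a lexicographic combination: either $\varphi(\langle k,x\rangle)<_{Ord}\varphi(\langle k,y\rangle)$, or these two $\varphi$-values coincide and $x\leq_{\omega}y$. I would record this reformulation first and then treat (i) and (ii) in turn.

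For (i), reflexivity, connectedness, transitivity, and antisymmetry of the lexicographic order follow at once from the corresponding properties of $<_{Ord}$ on ordinals and $\leq_{\omega}$ on $\omega$. For wellfoundedness, an infinite strictly descending chain $x_{0}\succ^{\nu(k)}x_{1}\succ^{\nu(k)}\dots$ would produce a non-increasing sequence of ordinals $\varphi(\langle k,x_{i}\rangle)$ that must stabilize after finitely many strict drops; but once the $\varphi$-values are constant, the $x_{i}$ are strictly descending in $(\omega,\leq_{\omega})$, which is impossible. The case $\nu(k)=\emptyset$ is vacuous.

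For (ii), the definition of $\sqsubseteq^{\nu}$ with $k=m$ factors as
\[
	(k,z)\sqsubseteq^{\nu}(k,x)\ \Leftrightarrow\ \langle k,z\rangle\preceq^{\varphi}_{\Gamma}\langle k,x\rangle\ \wedge\ \bigl(z\leq_{\omega}x\ \vee\ \lnot(\langle k,x\rangle\preceq^{\varphi}_{\Gamma}\langle k,z\rangle)\bigr).
\]
Because $\langle k,x\rangle\in G_{\nu}$, the norm property forces the first conjunct to imply $\langle k,z\rangle\in G_{\nu}$, i.e.\ $z\in\nu(k)$; in particular $\widehat{[x]}_{\nu(k)}\subseteq\nu(k)$. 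Under the assumption $z\in\nu(k)$ the right-hand sides of $\preceq^{\varphi}$ in the inner occurrence lie in $G_{\nu}$, so within the scope of the first conjunct we may swap $\preceq^{\varphi}_{\Gamma}$ for $\preceq^{\varphi}_{\breve{\Gamma}}$ without altering the truth value. This yields the equivalent formula
\[
	\langle k,z\rangle\preceq^{\varphi}_{\Gamma}\langle k,x\rangle\ \wedge\ \bigl(z\leq_{\omega}x\ \vee\ \lnot(\langle k,x\rangle\preceq^{\varphi}_{\breve{\Gamma}}\langle k,z\rangle)\bigr),
\]
which, by closure of $\Gamma$ under conjunction and under disjunction with recursive predicates, together with the fact that negating a relation in $\breve{\Gamma}$ gives one in $\Gamma$, lies in $\Gamma=E^{1}_{n}$. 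The dual swap (first conjunct through $\preceq^{\varphi}_{\breve{\Gamma}}$, inner negation through $\preceq^{\varphi}_{\Gamma}$) produces an equivalent formula in $\breve{\Gamma}$, so $\widehat{[x]}_{\nu(k)}$ is $\Delta^{1}_{n}$. Both formulas depend on $k$ and $x$ only as arguments fed into the fixed relations $\preceq^{\varphi}_{\Gamma},\preceq^{\varphi}_{\breve{\Gamma}}$ and into $\leq_{\omega}$, so a single pair of indices witnesses the $\Delta^{1}_{n}$-ness uniformly in $(k,x)$.

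The one delicate step is the swap of $\preceq^{\varphi}_{\Gamma}$ for $\preceq^{\varphi}_{\breve{\Gamma}}$ in the inner negation: the norm equivalence of (\ref{def:Gamma-norm}) requires the right-hand argument of $\preceq^{\varphi}$ to belong to $G_{\nu}$, so it is not valid for arbitrary $z$. The crucial input is that the first conjunct together with $x\in\nu(k)$ already forces $z\in\nu(k)$, making the swap legitimate inside the scope of the conjunction. This is the one place where the full strength of the prewellordering property (Theorem~\ref{theo:prewellord}, hence \textbf{PD}) is used; everything else is bookkeeping with lexicographic orders and closure properties of $\Gamma$ and $\breve{\Gamma}$.
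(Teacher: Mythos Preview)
Your proposal is correct and follows essentially the same route as the paper. For (i) the paper packages the lexicographic argument as an embedding of $(\nu(k);\preceq^{\nu(k)})$ into the ordinal $\omega\cdot\lambda$, while you verify the order axioms directly; for (ii) the paper writes out the unfactored disjunctive forms (your $A\wedge(C\vee\neg B')$ is their condition (b), and your $A'\wedge(C\vee\neg B)$ is their (c)), but the essential content---the swap between $\preceq^{\varphi}_{\Gamma}$ and $\preceq^{\varphi}_{\breve{\Gamma}}$ licensed by the norm property once the right-hand argument is known to lie in $G_{\nu}$---is identical.
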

\begin{proof}
	(i)\ The definition of the relation $\sqsubseteq^{\nu}$ implies that for arbitrary numbers $x,y$ from $\nu(k)$, the condition $x \preceq^{\nu(k)} y$ holds iff either $\varphi(\langle k,x \rangle) \lneq_{Ord} \varphi( \langle k,y\rangle)$, or $\varphi(\langle k,x\rangle) = \varphi(\langle k,y\rangle)\ \&\ x\leq_{\omega} y$. Hence, the map
	\[
		\psi \colon x \mapsto (x,\varphi(\langle k,x\rangle))
	\]
	induces an isomorphic embedding from $\nu(k)$ into the ordinal $\omega\cdot \lambda$. Therefore, we deduce that the poset $(\nu(k); \preceq^{\nu(k)})$ is well-ordered.

	(ii)\ Since $\langle k,x\rangle \in G_{\nu}$, for an arbitrary $z \in \widehat{[x]}_{\nu(k)}$, we have $\langle k,z\rangle \preceq^{\varphi}_{\Gamma} \langle k,x\rangle$ and hence, by the definition of a $\Gamma$-norm, $\langle k,z\rangle \in G_{\nu}$. In other words, we showed that $\widehat{[x]}_{\nu(k)} \subseteq \nu(k)$.
	
	Condition~(\ref{def:Gamma-norm}) implies that the following conditions are equivalent:
	\begin{itemize}
		\item[(a)] $z \preceq^{\nu(k)} x$;
		
		\item[(b)] ($\langle k,z\rangle \preceq^{\varphi}_{\Gamma} \langle k,x\rangle$ and $\langle k,x\rangle \not\preceq^{\varphi}_{\breve{\Gamma}} \langle k,z\rangle$), or ($\langle k,z\rangle \preceq^{\varphi}_{\Gamma} \langle k,x\rangle$ and $\langle k,x\rangle \preceq^{\varphi}_{\Gamma} \langle k,z\rangle$ and $z\leq_{\omega}x$);
		
		\item[(c)] ($\langle k,z\rangle \preceq^{\varphi}_{\breve{\Gamma}} \langle k,x\rangle$ and $\langle k,x\rangle \not\preceq^{\varphi}_{\Gamma} \langle k,z\rangle$), or ($\langle k,z\rangle \preceq^{\varphi}_{\breve{\Gamma}} \langle k,x\rangle$ and $\langle k,x\rangle \preceq^{\varphi}_{\breve{\Gamma}} \langle k,z\rangle$ and $z\leq_{\omega}x$).
	\end{itemize}
	Condition~(b) is logically equivalent to a $E^1_n$ formula, and Condition~(c) is equivalent to a $\Upsilon^1_n$ formula. Clearly, the formulas do not depend on the choice of $k$ and $x$~--- they only depend on the choice of the relations $\preceq^{\varphi}_{\Gamma}$ and $\preceq^{\varphi}_{\breve{\Gamma}}$. Lemma~\ref{lem:main-prop} is proved.
\end{proof}


\subsubsection{Reduction principle}

Suppose that $\Gamma \in \{ \Sigma^1_n, \Pi^1_n\,\colon n\geq 1\}$. The class $\Gamma$ satisfies the \emph{reduction principle} if for every pair $A,B\in \Gamma$, there are $A^{\ast}, B^{\ast}\in \Gamma$ such that:
\begin{gather*}
	A^{\ast} \subseteq A,\quad B^{\ast} \subseteq B,\\
	A^{\ast} \cup B^{\ast} = A\cup B,\quad A^{\ast} \cap B^{\ast} = \emptyset.
\end{gather*}

\begin{theorem}[Addison and Moschovakis~\cite{Add-Mos-68}; Martin~\cite{Martin}]\label{theo:reduction}
	Assume~\textbf{PD}. Let $n$ be a non-zero natural number. The class $E^1_n$ satisfies the reduction principle.
\end{theorem}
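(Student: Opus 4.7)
The plan is to derive the reduction principle from the prewellordering property (Theorem~\ref{theo:prewellord}) via the standard ``interleaving'' construction. Given $A, B \in E^1_n$, I form the set $D := \{2k : k \in A\} \cup \{2k + 1 : k \in B\}$, which is obviously in $E^1_n$. Then by Theorem~\ref{theo:prewellord} I fix an $E^1_n$-norm $\psi \colon D \to Ord$, together with its two associated relations $\preceq^{\psi}_{E^1_n}$ (in $E^1_n$) and $\preceq^{\psi}_{\Upsilon^1_n}$ (in $\Upsilon^1_n$) satisfying the defining condition~(\ref{def:Gamma-norm}).

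The intuition is to put $k$ into $A^*$ whenever its $A$-copy $2k$ appears no later in the prewellordering than its $B$-copy $2k+1$ (with ties breaking in favour of $A^*$), and dually to put $k$ into $B^*$ when the $B$-copy strictly precedes the $A$-copy. Concretely, I would set
\[
	A^* := \{k \in \omega : 2k \in D \ \wedge\ \neg(2k+1 \preceq^{\psi}_{\Upsilon^1_n} 2k \ \wedge\ \neg(2k \preceq^{\psi}_{E^1_n} 2k+1))\},
\]
\[
	B^* := \{k \in \omega : 2k+1 \in D \ \wedge\ \neg(2k \preceq^{\psi}_{\Upsilon^1_n} 2k+1)\}.
\]
Each conjunct is either an $E^1_n$ statement or the negation of a $\Upsilon^1_n$ statement, so both $A^*$ and $B^*$ end up in $E^1_n$.

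To conclude, I would use the key consequence of~(\ref{def:Gamma-norm}): whenever the right-hand argument $y$ belongs to $D$, the relation $x \preceq^{\psi}_{\Upsilon^1_n} y$ is equivalent to the concrete statement $x \in D \wedge \psi(x) \leq_{Ord} \psi(y)$, and analogously for $\preceq^{\psi}_{E^1_n}$; in particular, if $y \in D$ and $x \notin D$ then neither relation holds. Substituting $y = 2k$ in the definition of $A^*$ and $y = 2k+1$ in that of $B^*$, a case split on whether $k$ lies in $A \setminus B$, $B \setminus A$, or $A \cap B$ then shows that $A^* = \{ k \in A : k \notin B \vee \psi(2k) \leq \psi(2k+1) \}$ and $B^* = \{k \in B : k \notin A \vee \psi(2k+1) < \psi(2k)\}$. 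From these descriptions the inclusions $A^* \subseteq A$ and $B^* \subseteq B$ are immediate, disjointness follows because $\leq_{Ord}$ is a linear order, and $A^* \cup B^* = A \cup B$ is handled by the same case split.

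The main obstacle is complexity bookkeeping: the relations $\preceq^{\psi}_{E^1_n}$ and $\preceq^{\psi}_{\Upsilon^1_n}$ are only guaranteed to behave canonically when the right-hand argument sits inside $D$, so each conjunct above has to be written with the right-hand argument placed to ensure this. The only genuine design choice in the construction is selecting which of the two norm-relations to use at each spot so that the negations fall on $\Upsilon^1_n$ formulas and the resulting definitions of $A^*$ and $B^*$ land in $E^1_n$ rather than in a mixed Boolean class.
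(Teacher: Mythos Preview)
Your argument is correct and is precisely the route the paper indicates: the paper does not give its own proof of Theorem~\ref{theo:reduction} but merely remarks that reduction for $E^1_n$ follows from the prewellordering property (Theorem~\ref{theo:prewellord}), citing Exercise~4B.10 and Corollary~6B.2 of Moschovakis. Your interleaving construction via a norm on $D=\{2k:k\in A\}\cup\{2k+1:k\in B\}$ is exactly the standard derivation behind that reference, and your complexity bookkeeping is handled correctly (in particular, the potential ambiguity of $2k\preceq^{\psi}_{E^1_n}2k+1$ when $2k+1\notin D$ is harmless because the accompanying conjunct $2k+1\preceq^{\psi}_{\Upsilon^1_n}2k$ already fails in that case).
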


\begin{remark}
	The classes $\Pi^1_1$ and $\Sigma^1_2$ satisfy reduction principle, without assuming \textbf{PD} (Addison~\cite{Add-59}).
\end{remark}

Note that the reduction principle for $E^1_n$ (as formulated above) follows from Theorem~\ref{theo:prewellord}, see Exercise~4B.10 and Corollary~6B.2 of Ref.~\cite{Moschovakis}.



\section{Rogers semilattices for finite families} \label{sect:finite_fam}

For the sake of convenience, from now on, we will use the following notation:
\[
	\mathcal{R}^1_n(\mathcal{S}) := \mathcal{R}_{E^1_n}(\mathcal{S}).
\]

The section discusses elementary properties of the semilattices $\mathcal{R}^1_n(\mathcal{S})$, where $\mathcal{S}$ is finite. As a warm-up, we apply standard numbering-theoretic techniques (see, e.g., Ref.~\cite{BGS-03-isom-3}) to prove that $\mathcal{R}^1_n(\mathcal{S})$ is a distributive upper semilattice (Proposition~\ref{prop:distr}). We also show that if $\mathcal{S}$ contains more than one element, then $\mathcal{R}^1_n(\mathcal{S})$ is infinite and cannot be a lattice (Proposition~\ref{prop:inf-not-lat}). Note that these proofs \emph{do not require} additional set-theoretic assumptions. After that, we obtain a criterion for when $\mathcal{R}^1_n(\mathcal{S})$ has the greatest element (Theorem~\ref{theo:greatest}). The proof of this criterion already heavily employs the consequences of \textbf{PD} discussed in Section~\ref{sect:PD}. After that, the developed techniques are used to prove the following: If $\mathcal{R}^1_n(\mathcal{S})$ has no greatest element, then it is upwards dense (Theorem~\ref{theo:min-cover}).


Before proceeding to main results, we observe the following very simple fact:
\begin{remark}
	Let $\mathcal{S}$ be a non-empty finite family of $E^1_n$ sets. Then $\mathcal{S}$ is $E^1_n$-computable, and the semilattice $\mathcal{R}^1_n(\mathcal{S})$ contains the least element.
\end{remark}
\begin{proof}
 Suppose that $\mathcal{S} = \{B_0,B_1,\dots,B_m\}$. Then the numbering
\[
	\mu(k) := \begin{cases}
		B_k, & \text{if } k\leq m,\\
		B_0, & \text{otherwise},
	\end{cases}
\]
is a $E^1_n$-computable numbering of $\mathcal{S}$.

Assume that $\nu$ is an arbitrary $E^1_n$-computable numbering of $\mathcal{S}$. Choose indices $b_i$, $i\leq m$, with $\nu(b_i) = B_i$. Clearly, a computable function
\[
	f(k) := \begin{cases}
		b_k, & \text{if } k\leq m,\\
		b_0, & \text{otherwise},
	\end{cases}
\]
provides a reduction from $\mu$ into $\nu$.
\end{proof}


Recall that an upper semilattice $\mathcal{A} = (A;\leq, \vee)$ is \emph{distributive} if for all $b,a_0,a_1\in\mathcal{A}$, the following holds:
\[
	(b\leq a_0 \vee a_1) \ \Rightarrow\ \exists b_0 \exists b_1 [ (b = b_0 \vee b_1) \,\&\, (b_0 \leq a_0) \,\&\,  (b_1 \leq a_1)].
\]

The next two propositions establish general facts about Rogers $E^1_n$-se\-mi\-lat\-ti\-ces of finite families, note that these facts do not depend on \textbf{PD}.

\begin{proposition} \label{prop:distr}
	Suppose that $\mathcal{S} = \{ A_0, A_1,\dots,A_m\}$ is a finite family of $E^1_n$ sets. Then the upper semilattice $\mathcal{R}^1_n(\mathcal{S})$ is distributive.
\end{proposition}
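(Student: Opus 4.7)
The plan is to adapt the standard ``splitting'' argument for distributivity of Rogers semilattices (e.g., from Ershov's book) to our setting of $E^1_n$-computable numberings of a finite family. The main technical point that makes the finite case easy is the existence of a smallest element in $\mathcal{R}^1_n(\mathcal{S})$: the numbering $\mu$ from the preceding remark, defined by $\mu(k)=A_k$ for $k\le m$ and $\mu(k)=A_0$ otherwise, is $E^1_n$-computable and satisfies $\mu\leq\nu$ for every $\nu\in Com_{E^1_n}(\mathcal{S})$.

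Now suppose $\beta,\alpha_0,\alpha_1\in Com_{E^1_n}(\mathcal{S})$ witness $b\leq a_0\vee a_1$, i.e.\ $\beta\leq\alpha_0\oplus\alpha_1$ via a total computable $f$. Split $\omega$ into the recursive sets $N_0=\{n:f(n)\text{ even}\}$ and $N_1=\{n:f(n)\text{ odd}\}$ and define auxiliary maps
\[
\beta'_i(n)=\begin{cases}\beta(n), & n\in N_i,\\ A_0, & n\notin N_i,\end{cases}\qquad i=0,1.
\]
Using the definition of reduction through $f$, we have $\beta'_0(n)=\alpha_0(f(n)/2)$ on $N_0$ and a constant $\alpha_0$-image off $N_0$ (choose any $\alpha_0$-index for $A_0$), so $\beta'_0\leq\alpha_0$; symmetrically $\beta'_1\leq\alpha_1$. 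In particular each $\beta'_i$ is $E^1_n$-computable.

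Set $\beta_0:=\beta'_0\oplus\mu$ and $\beta_1:=\beta'_1\oplus\mu$. Both are surjective onto $\mathcal{S}$ (because $\mu$ is), and both lie in $Com_{E^1_n}(\mathcal{S})$ by closure of $E^1_n$ under $\oplus$. Since $\mu\leq\alpha_i$ and $\beta'_i\leq\alpha_i$, we obtain $\beta_i\leq\alpha_i$, i.e.\ $b_i\leq a_i$ in the semilattice. It remains to verify $\beta_0\oplus\beta_1\equiv\beta$. The inequality $\beta_0\oplus\beta_1\leq\beta$ follows because $\beta'_i\leq\beta$ (map $n$ to itself on $N_i$ and to a fixed $\beta$-index of $A_0$ otherwise) and $\mu\leq\beta$, so both summands reduce to $\beta$. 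Conversely, given $n$, we compute $f(n)$ and route $n$ either to a $\beta'_0$-index or to a $\beta'_1$-index inside $\beta_0\oplus\beta_1$, which yields $\beta\leq\beta_0\oplus\beta_1$. Thus $b=b_0\vee b_1$ with $b_i\leq a_i$, which is precisely the distributive law.

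I do not anticipate any serious obstacle: the argument is essentially bookkeeping, and finiteness of $\mathcal{S}$ is used only to guarantee a least numbering $\mu$, which repairs the surjectivity of the naive splitting $\beta'_0,\beta'_1$. No appeal to \textbf{PD}, the prewellordering property, or the reduction principle is needed, matching the parenthetical comment that this fact is independent of the extra set-theoretic hypotheses.
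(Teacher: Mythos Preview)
Your argument is correct and follows essentially the same splitting strategy as the paper: partition $\omega$ by the parity of $f(n)$, restrict $\beta$ to each piece, and pad to restore surjectivity onto $\mathcal{S}$. The only cosmetic differences are that the paper re-indexes each piece via a computable enumeration of $R_i$ and pads by prepending the finite list $A_0,\dots,A_m$, whereas you keep the original indexing (filling the complement with $A_0$) and pad by taking $\oplus\,\mu$; as a side benefit your formulation absorbs the degenerate case $N_i=\emptyset$ without a separate clause.
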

\begin{proof}
	Let $\mu$, $\nu_0$, and $\nu_1$ be $E^1_n$-computable numberings of $\mathcal{S}$. Suppose that a computable function $f(x)$ reduces $\mu$ to $\nu_0 \oplus \nu_1$. Consider computable sets
	\[
		R_0 = \{ x\,\colon f(x) \text{ is even}\},\quad
		R_1 = \{ x\,\colon f(x) \text{ is odd}\}.
	\]
	
	First, assume that one of these sets, say, $R_1$ is empty. Then $\mu$ is reducible to $\nu_0$, and one can define $\mu_0 := \mu$, and
	\[
		\mu_1(k) := \begin{cases}
			A_k, & \text{if } k \leq m,\\
			A_0, & \text{if } k > m.
		\end{cases}
	\]
	It is not difficult to see that $\mu \equiv \mu_0 \oplus \mu_1$, $\mu_0 \leq \nu_0$, and $\mu_1 \leq \nu_1$.
	
	Therefore, from now on, we may assume that both $R_0$ and $R_1$ are non-empty. For $i\in\{ 0,1\}$, fix a total computable function $g_i$ with $range(g_i) = R_i$. Define a $E^1_n$-computable numbering $\xi_i:= \mu\circ g_i$. It is not hard to show (see, e.g., Proposition~3.1 of Ref.~\cite{BGS-03-isom-3}) that $\xi_i\leq \nu_i$ and $\mu \equiv \xi_0 \oplus \xi_1$. Note that in general, $\xi_i$ indexes \emph{not all} elements of $\mathcal{S}$.
	
	For $i\in \{0,1\}$, we define
	\[
		\mu_i(k) := \begin{cases}
			A_k, & \text{if } k\leq m,\\
			\xi_i(k-m-1), & \text{if } k > m.
		\end{cases}
	\]
	Clearly, each $\mu_i$ is a $E^1_n$-computable numbering of the family $\mathcal{S}$. On the other hand, it is straightforward to show that $\mu_i \leq \nu_i$ and $\mu \equiv \mu_0\oplus \mu_1$. Therefore, the semilattice $\mathcal{R}^1_n(\mathcal{S})$ is distributive. This concludes the proof of Proposition~\ref{prop:distr}.
\end{proof}

For an $E^1_n$ set $A$, consider a one-element family $\mathcal{T}:=\{ A\}$. It is obvious that the family $\mathcal{T}$ has \emph{only one} numbering, and hence, the semilattice $\mathcal{R}^1_n(\mathcal{T})$ is one-element.

The next proposition shows that if a finite family $\mathcal{S}$ contains more than one element, then for this $\mathcal{S}$, one can recast the results of Khutoretskii~\cite{Khut-71} and Selivanov~\cite{Sel-76}, mentioned in the introduction.

\begin{proposition}\label{prop:inf-not-lat}
	Let $\mathcal{S} = \{A_0, A_1, \dots, A_m\}$ be a finite family of $E^1_n$ sets, which contains at least two elements. Then the Rogers semilattice $\mathcal{R}^1_n(\mathcal{S})$ is infinite, and it is not a lattice.
\end{proposition}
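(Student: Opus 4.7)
The strategy is to adapt the classical arguments of Khutoretskii and Selivanov to the present $E^1_n$-setting, using the $\Delta^1_n$ cell structure of Lemma~\ref{lem:main-prop} as a replacement for finite enumeration stages.

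The first step is to produce a $E^1_n$-computable numbering strictly above the minimum numbering $\mu_{\min}$ of $\mathcal{S}$ described in the Remark above. Pick two distinct elements $A_i, A_j$ of $\mathcal{S}$ and a non-computable $\Delta^1_n$ set $W \subseteq \omega$ (for instance $W = \emptyset'$, which lies in $\Delta^1_1 \subseteq \Delta^1_n$ for every $n$). I would define a numbering $\nu$ that enumerates the remaining sets $A_\ell$, $\ell \neq i,j$, on some finite computable block of indices, and on the other indices places $A_i$ or $A_j$ depending on membership in $W$. Since both $W$ and $\omega\setminus W$ are in $E^1_n$, the graph $G_\nu$ is $E^1_n$. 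Any computable reduction from $\nu$ to $\mu_{\min}$, applied to the coding indices, would compute $W$, contradicting its non-computability; hence $\mu_{\min}\lneq \nu$. Iterating the construction with sets $W_k:= \emptyset^{(k)}$ of strictly increasing many-one degree yields an infinite strictly ascending chain $\mu_{\min}\lneq\nu_1\lneq\nu_2\lneq\cdots$ inside $\mathcal{R}^1_n(\mathcal{S})$, so the semilattice is infinite.

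For the ``not a lattice'' assertion I would recast Selivanov's strategy: produce two $E^1_n$-computable numberings $\alpha,\beta$ of $\mathcal{S}$ whose set of common lower bounds has no greatest element. The plan is to show that for every $E^1_n$-computable numbering $\eta$ below both $\alpha$ and $\beta$, one can construct a strictly larger common lower bound $\xi$. This diagonalization is driven by the wellorderings $\preceq^{\alpha(k)}$ and $\preceq^{\beta(k)}$ of Lemma~\ref{lem:main-prop}(i): along each $\Delta^1_n$-cell $\widehat{[x]}_{\alpha(k)}$ one blocks a potential reduction of $\eta$ into the alleged meet by deciding uniformly whether $\xi$ follows $\alpha$ or $\beta$ at that cell.

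The main obstacle, I expect, is keeping these constructions inside $E^1_n$. In the classical $\Sigma^0_1$ setting one proceeds stage by stage along an effective enumeration, whereas here one must recurse along the prewellorderings supplied by Theorem~\ref{theo:prewellord}; the fact that $\widehat{[x]}_{\nu(k)}$ is $\Delta^1_n$ uniformly in $k$ and $x$, asserted in Lemma~\ref{lem:main-prop}(ii), is precisely what allows the defining condition of $G_\xi$ to remain in $E^1_n$ and ultimately drives both the Khutoretskii-style splitting and the Selivanov-style descending-sequence arguments through in this more general setting.
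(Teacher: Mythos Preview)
Your proposal takes a substantially harder route than the paper, and the ``not a lattice'' part is only a plan, not a proof. The paper's argument is both simpler and strictly stronger: for each nontrivial c.e.\ set $W$ it defines the numbering
\[
\nu^{W}(k)=\begin{cases}A_k,&k\le m-2,\\ A_{m-1},&k\ge m-1,\ k-m+1\in W,\\ A_m,&k\ge m-1,\ k-m+1\notin W,\end{cases}
\]
and checks that any $\mu\le\nu^{W}$ is equivalent to some $\nu^{V}$ with $V$ c.e.\ and $V\le_m W$. Hence the principal ideal below $\nu^{\emptyset'}$ is isomorphic to the upper semilattice $\mathbf{R}_m$ of c.e.\ $m$-degrees. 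Since $\mathbf{R}_m$ is infinite and not a lattice, both conclusions follow at once. Crucially, this uses no \textbf{PD}, no prewellorderings, and no Lemma~\ref{lem:main-prop}; the paper explicitly flags Proposition~\ref{prop:inf-not-lat} as \emph{not} requiring additional set-theoretic assumptions, whereas your Selivanov-style sketch leans on the $E^1_n$-norm machinery.

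There is also a genuine gap in what you wrote. For the ``not a lattice'' half you only describe an intention: build $\alpha,\beta$ and, given any common lower bound $\eta$, diagonalize along the $\Delta^1_n$ cells to produce a strictly larger common lower bound $\xi$. You do not specify $\alpha$ and $\beta$, do not say what property of them makes the diagonalization succeed, and do not explain how the recursion along $\preceq^{\alpha(k)}$ interacts with the reduction witnesses for $\eta\le\alpha$ and $\eta\le\beta$. Selivanov's original argument is delicate even for $\Sigma^0_1$, and nothing here shows it survives the transfer. Separately, your infiniteness argument asserts a strict chain $\nu_1\lneq\nu_2\lneq\cdots$ from the sets $\emptyset^{(k)}$ but only verifies $\mu_{\min}\lneq\nu_k$; to get $\nu_k\lneq\nu_{k+1}$ you still need the two-way correspondence $\nu^{W}\le\nu^{W'}\Leftrightarrow W\le_m W'$, which is essentially the paper's embedding lemma and, once you have it, already delivers the non-lattice conclusion for free.
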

\begin{proof}
	We follow the proof of Theorem~2.1 in Ref.~\cite{GS-97}. Note that $m\geq 1$. Let $W$ be a computably enumerable set such that $W\neq \emptyset$ and $W\neq \omega$. We define a numbering $\nu^{W}$ as follows: for $k\in\omega$, set
	\[
		\nu^{W}(k) := \begin{cases}
			A_k, & \text{if } k\leq m-2,\\
			A_{m-1}, & \text{if } k\geq m-1 \text{ and } (k-m+1) \in W,\\
			A_m, & \text{if } k\geq m-1 \text{ and } (k-m+1) \not\in W.
		\end{cases}
	\]
	It is clear that $\nu^{W}$ is a $E^1_n$-computable numbering of the family $\mathcal{S}$.
	
	Let $\mu$ be a numbering of $\mathcal{S}$ such that $\mu\leq \nu^{W}$. Fix a computable function $f$, which reduces $\mu$ to $\nu^{W}$. Then the set $V:=\{ k\,\colon f(k)\in W\}$ is c.e., and it is straightforward to show that $V\leq_m W$ and $\mu\equiv \nu^{V}$. Consider a principal ideal $\mathcal{I}$, induced inside $\mathcal{R}^1_n(\mathcal{S})$ by the (degree of the) numbering $\nu^{\emptyset'}$. One can show that $\mathcal{I}$ is isomorphic to the upper semilattice of c.e. $m$-degrees $\mathbf{R}_m$. Since $\mathbf{R}_m$ is not a lattice, the structure $\mathcal{R}^1_n(\mathcal{S})$ also cannot be a lattice. Proposition~\ref{prop:inf-not-lat} is proved.
\end{proof}


\subsection{Existence of a greatest element}

Now we proceed to investigating the following question (while assuming \textbf{PD}): When does the semilattice $\mathcal{R}^1_n(\mathcal{S})$ has the greatest element?

\begin{theorem}[\textbf{PD}] \label{theo:greatest}
	Let $n$ be a non-zero natural number.
	Let \[
		\mathcal{S} = \{ A_1, A_2,\dots, A_m\}
	\] 
	be a finite family of $E^1_n$ sets.
	Then the Rogers semilattice $\mathcal{R}^1_n(\mathcal{S})$ has the greatest element if and only if the family $\mathcal{S}$ contains a least element under set-theoretic inclusion.
\end{theorem}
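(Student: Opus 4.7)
The plan is to prove the two directions of the equivalence separately.

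For the sufficiency direction, assuming $\mathcal{S}$ has a $\subseteq$-least element $A_1$: I would construct a universal $E^1_n$-computable numbering $\eta$ of $\mathcal{S}$ starting from the universal numbering $\nu_U$ of Corollary~\ref{coroll:univ}. Intuitively, $\eta(k)$ should be $A_i$ when $\nu_U(k) = A_i$, and $A_1$ otherwise. The condition ``$\nu_U(k) = A_i$'' is not directly $E^1_n$, so my approach is to apply the Reduction Principle (Theorem~\ref{theo:reduction}) to disjointify a suitable family of $E^1_n$ candidate sets, obtaining pairwise disjoint $E^1_n$ pieces $P_1, \ldots, P_m \subseteq \omega$ intended to capture the indices where $\nu_U(k) = A_i$. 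Setting $\eta(k) := A_i$ on $P_i$ and $\eta(k) := A_1$ elsewhere should yield an $E^1_n$-computable numbering of $\mathcal{S}$; the key point making $G_\eta$ fall inside $E^1_n$ is that $A_1 \subseteq A_i$ lets the default value be absorbed into each $A_i$, so the description of $G_\eta$ collapses to a pure $E^1_n$ formula. Universality of $\eta$ then follows from Corollary~\ref{coroll:univ}: any $E^1_n$-computable $\mu$ of $\mathcal{S}$ reduces to $\nu_U$ by some computable $g$, and the same $g$ reduces $\mu$ to $\eta$ because $\nu_U(g(k)) = \mu(k) \in \mathcal{S}$ forces $g(k)$ into the appropriate $P_i$.

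For the necessity direction, assuming $\mathcal{S}$ has no $\subseteq$-least element: given an arbitrary $E^1_n$-computable numbering $\eta$ of $\mathcal{S}$, I would build an $E^1_n$-computable $\mu$ of $\mathcal{S}$ with $\mu \not\leq \eta$ by diagonalization against all total computable $(f_e)_{e\in\omega}$. Since $\mathcal{S}$ is finite and has no $\subseteq$-least element, its poset $(\mathcal{S},\subseteq)$ has two $\subseteq$-incomparable elements $A, B$; fix witnesses $a \in A \setminus B$ and $b \in B \setminus A$. For each $e$, I would reserve a fresh index $k_e$ and define $\mu(k_e)$ based on the $E^1_n$ conditions ``$a \in \eta(f_e(k_e))$'' (which excludes $\eta(f_e(k_e)) = B$) and ``$b \in \eta(f_e(k_e))$'' (which excludes $\eta(f_e(k_e)) = A$). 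The Reduction Principle splits these into pairwise disjoint $E^1_n$ pieces, on which $\mu(k_e)$ is set to $B$ and $A$ respectively so as to disagree with $\eta(f_e(k_e))$. When $\mathcal{S}$ has more than two elements and $\eta(f_e(k_e))$ happens to be some third $A_j$, I would use further distinguishing witnesses inside the $\Delta^1_n$ approximations $\widehat{[x]}_{\eta(f_e(k_e))}$ of Lemma~\ref{lem:main-prop}. The remaining indices are used to enumerate all of $\mathcal{S}$, so that $\mu$ is a genuine numbering.

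The main obstacle I expect is maintaining $E^1_n$-complexity of $G_\eta$ and $G_\mu$ throughout both constructions. Naive predicates such as ``$\nu(k) = A_i$'' or ``$\nu(k) \neq A_i$'' belong to a strictly mixed $E^1_n / \Upsilon^1_n$ class once $\forall$-quantification over $x$ is involved, and so cannot be used as-is. My principal technical tools are those enabled by \textbf{PD}: the Reduction Principle (Theorem~\ref{theo:reduction}) lets me convert non-$E^1_n$ case distinctions into $E^1_n$ partitions of $\omega$, while the $\widehat{[x]}_{\nu(k)}$ gadget from Lemma~\ref{lem:main-prop} supplies $\Delta^1_n$ inner approximations that make stage-by-stage reasoning available within $E^1_n$. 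The asymmetry between the two directions ultimately reduces to whether $\mathcal{S}$ supplies a ``universal fallback'' target: a $\subseteq$-least element provides one that keeps the construction inside $E^1_n$, whereas its absence breaks the fallback and opens the door to the diagonalization against $\eta$.
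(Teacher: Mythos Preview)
Your sufficiency argument has a genuine gap: you never specify the ``suitable family of $E^1_n$ candidate sets'' $Q_i$ to which you apply the Reduction Principle, and the natural candidates do not work. The Reduction Principle only gives you pairwise disjoint $P_i\subseteq Q_i$ with $\bigcup_i P_i=\bigcup_i Q_i$; it does \emph{not} promise that an index $k$ with $\nu_U(k)=A_i$ lands in $P_i$ rather than in some other $P_j$. For example, if you take $Q_i=\{k:F_i\subseteq\nu_U(k)\}$ for finite labels $F_i$ (the only obvious way to get $E^1_n$ sets here), then whenever $A_j\subsetneq A_i$ one has $\{k:\nu_U(k)=A_i\}\subseteq Q_i\cap Q_j$, and after reduction such a $k$ may well end up in $P_j$, giving $\eta(k)=A_j\neq A_i$ and destroying the claim ``$\nu_U(g(k))\in\mathcal{S}$ forces $g(k)$ into the appropriate $P_i$.'' The paper circumvents exactly this difficulty: it uses the prewellordering $\preceq^{\nu_U(k)}$ from Lemma~\ref{lem:main-prop} to linearly order the elements of $\nu_U(k)$, and then follows a \emph{maximal chain} $i_0\lhd i_1\lhd\cdots$ in the inclusion poset of $\mathcal{S}\setminus\{A_1\}$, choosing each $i_{l+1}$ according to which label $F_j$ appears first in that order among the minimal elements above $i_l$. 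This simultaneously guarantees that the output always lies in $\mathcal{S}$ (one is always at some $A_{i_l}$ on the chain) and that when $\nu_U(k)\in\mathcal{S}$ the chain climbs precisely to $\nu_U(k)$. This chain-following idea, driven by the prewellordering, is the missing ingredient in your plan; a single application of reduction cannot see the poset structure of $\mathcal{S}$.

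Your necessity argument is closer in spirit to the paper's, but the handling of ``some third $A_j$'' via unspecified further witnesses is too vague to count as a proof. The paper's version is cleaner: it takes \emph{all} $\subseteq$-minimal elements $A_1,\dots,A_l$ (not just two incomparable ones), chooses finite $F_i$ with $F_i\subseteq A_j\Leftrightarrow A_i\subseteq A_j$, applies the Reduction Principle to the $E^1_n$ sets $Q_i=\{k:F_i\subseteq\nu(k)\}$ (whose union is now automatically all of $\omega$, since every $\nu(k)\in\mathcal{S}$ lies above some minimal element), and then diagonalizes by sending any $k\in R_i$ to a fixed $\nu$-index for $A_{\sigma(i)}$, where $\sigma$ is a cyclic shift of $\{1,\dots,l\}$. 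Your single witnesses $a,b$ are the special case $l=2$ with one-element $F_i$'s; without using all minimal elements you lose the coverage $\bigcup_i Q_i=\omega$ that makes the diagonalization go through without ad hoc patching.
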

\begin{proof}
	The basic idea is essentially the same as that of Theorem~3.2 in Ref.~\cite{BGS-03-completeness}, but we also have to carefully interweave set-theoretic results from Section~\ref{sect:PD}.

	Choose a family of finite sets $\{F_1,F_2,\dots,F_m\}$ with the following property: for all $i$ and $j$, we have
	\[
		F_i \subseteq F_j\ \Leftrightarrow\ A_i \subseteq A_j \ \Leftrightarrow\ F_i \subseteq A_j.
	\]

	We fix a universal $E^1_n$-computable numbering $\nu_U$ from Corollary~\ref{coroll:univ}. For the sake of brevity, the relation $\sqsubseteq^{\nu^{U}}$ from Lemma~\ref{lem:main-prop} will be denoted by $\sqsubseteq$.

	\underline{($\Leftarrow$).} Suppose that $A_1$ is the least element inside $\mathcal{S}$. Without loss of generality, we may assume that $F_1=\emptyset$.
	
	Before giving a formal construction, we outline the intuition behind our proof. For starters, we sketch a \emph{classical} argument: Assume that a family $\mathcal{T}$ contains precisely the following sets:
	\[
		\emptyset, \{0\}, \{1\}, \{0,1\}, \{1,2\},
	\]
	and we want to build a universal $\Sigma^0_1$-computable numbering of $\mathcal{T}$.
	
	In order to achieve this, first, we choose a universal $\Sigma^0_1$-computable numbering $\nu$ of the family of \emph{all} c.e. sets. Note that, similarly to Corollary~\ref{coroll:univ}, \emph{any} $\Sigma^0_1$-computable numbering (of any family) is reducible to $\nu$.
	
	 We describe an effective procedure that transforms the numbering $\nu$ into a $\Sigma^0_1$-computable numbering $\xi$, which indexes \emph{only} elements from $\mathcal{T}$. Fix an effective uniform approximation
	 \[
	 	\nu(k) = \bigcup_{s\in\omega} \nu^s(k),
	 \]
	 where every $\nu^s(k)$ is a finite set, and $\nu^s(k) \subseteq \nu^{s+1}(k)$. As per usual, we may assume that the difference $\nu^{s+1}(k)\setminus \nu^s(k)$ contains at most one element.
	
	The desired procedure works in a pretty straightforward, ``dynamic'' fashion:
	\begin{itemize}
		\item[(a)] While neither $0$, nor $1$ belongs to $\nu^s(k)$, we just put $\xi^s(k) := \emptyset$. Assume that $s_0$ is the least step such that (precisely) one of the elements $0$ or $1$ appears in $\nu^{s_0}(k)$.
		
		\item[(b.1)] If $0 \in \nu^{s_0}(k)$, then define $\xi^{s_0}(k):= \{0\}$. Wait for a first step $s_1 > s_0$ with $1\in \nu^{s_1}(k)$. While waiting, do not change the (approximation of the) set $\xi(k)$. If such a step appears, then set $\xi(k) = \xi^{s_1}(k) := \{ 0,1\}$.
		
		\item[(b.2)] Otherwise, we have $1\in \nu^{s_0}(k)$. Put $\xi^{s_0}(k) := \{1\}$. Wait for a first step $s_2 > s_0$ such that one of the elements $0$ or $2$ belongs to $\nu^{s_2}(k)$. While waiting, don't change $\xi(k)$. When such a step $s_2$ is found, denote the element from $\{0,2\}\cap \nu^{s_2}(k)$ as $v$, and define $\xi(k) = \xi^{s_2}(k) := \{ 0,v\}$.
	\end{itemize}
	Clearly, the described procedure is uniform in $k$. Moreover, it builds a $\Sigma^0_1$-co\-m\-pu\-ta\-ble numbering $\xi$, and for each $k$, the constructed $\xi(k)$ is an element from $\mathcal{T}$. The key property of the construction is provided by the following simple observation: if $\nu(k)$ is already an element of $\mathcal{T}$, then $\xi(k)$ is \emph{equal} to $\nu(k)$.
	
	Now let $\mu$ be an arbitrary $\Sigma^0_1$-computable numbering of $\mathcal{T}$. Since the numbering $\nu$ is universal, one can choose a computable function $f(x)$ such that $\mu(k) = \nu(f(k))$ for all numbers $k$. The key property of the construction implies that $\mu(k) = \nu(f(k))=\xi(f(k))$. In other words, $\mu \leq \xi$, and $\xi$ induces the greatest element of the Rogers semilattice for the $\Sigma^0_1$-computable family $\mathcal{T}$.
	
	This concludes the description of the classical argument. The argument \emph{cannot} be readily transferred to the $E^1_n$ setting: roughly speaking, we do not know what is an effective approximation of a given $E^1_n$-com\-pu\-ta\-ble numbering $\nu$. Nevertheless, this obstacle can be circumvented as follows~--- a careful analysis of the construction above reveals that the \emph{only} important question we need to address is the following:
	\begin{gather*}
		\text{What does one mean, when she says: ``The number 0 appears}\\
		\text{\emph{earlier} than the number 1, in an approximation of $\nu(k)$''?}
	\end{gather*}
	And we \emph{can} give a precise answer to this question: Zero appears earlier than one iff there is a number $x\in \nu(k)$ such that $0\in \widehat{[x]}_{\nu(k)}$ and $1\not\in \widehat{[x]}_{\nu(k)}$. In other words, the wellorder $(\nu(k); \preceq^{\nu(k)})$ provided by Lemma~\ref{lem:main-prop} allows us to ``replace'' the stages $s_0,s_1,s_2$, used in the classical construction, by elements $x_0,x_1,x_2$ belonging to the set $\nu(k)$ itself. The formal details of this (quite informal) idea are elaborated below.
	
	
	Consider a set $I := \{ 2,3,\dots,m\}$, note that here we do not include $1$ into $I$. We introduce a partial order $\unlhd$ on $I$ as follows. For $i,j\in I$, we say that $i \unlhd j$ iff $A_i\subseteq A_j$.
	
	For a number $i\in I$, its upper cone is the following set:
	\[
		\widetilde{(i)} := \{ j\in I\,\colon j \unrhd i,\  j\neq i\}.
	\]
	Notice that $i$ itself does not belong to the cone $\widetilde{(i)}$.

	Let $C = \{ i_0 \lhd i_1 \lhd \dots \lhd i_t\}$ be a maximal (under set-inclusion) increasing chain inside the poset $(I;\unlhd)$. We define the following formulas:
	\begin{gather*}
		\psi^C_{0}(k) := \exists x \bigg[ x\in \nu^U(k) \,\&\, \bigwedge_{y\in F_{i_0}} (k,y) \sqsubseteq (k,x) \,\&\, \\
		\bigwedge_{j \text{ is minimal inside } I;\, j\neq i_0}\ \bigvee_{z \in F_j} (k,z) \not\sqsubseteq (k,x) \bigg];\\
		\psi^C_{l+1}(k) := \psi^{C}_{l}(k) \,\&\, \exists x \bigg[ x\in \nu^U(k) \,\&\, \bigwedge_{y\in F_{i_{l+1}}} (k,y) \sqsubseteq (k,x) \,\&\, \\
		\bigwedge_{j \text{ is minimal inside } \widetilde{(i_l)};\, j\neq i_{l+1}}\ \bigvee_{z \in F_j} (k,z) \not\sqsubseteq (k,x) \bigg];\\
		\psi^C(x,k) := \bigvee_{l\leq t} [ \psi_{l}^C(k) \,\&\, x\in A_{i_l} ].
	\end{gather*}
	The main property of the relation $\sqsubseteq$ (Lemma~\ref{lem:main-prop}) implies that each of these formulas is logically equivalent to a $E^1_n$ condition.
	
	We define a new numbering $\xi$ as follows: $x\in \xi(k)$ iff
	\[
		(x\in A_1) \text{ or } \bigvee_{C \text{ is a maximal chain in } I} \psi^C(x,k).
	\]
	Clearly, the set $G_{\xi}$ belongs to $E^1_n$ and thus, the numbering $\xi$ is $E^1_n$-com\-pu\-ta\-ble.  We establish the following important property of $\xi$:
	
	\begin{claim}\label{claim:first}
		For any $k$, the set $\xi(k)$ belongs to the family $\mathcal{S}$. Moreover, if $\nu^U(k)\in\mathcal{S}$, then $\xi(k) = \nu^{U}(k)$.
	\end{claim}
	\begin{proof}
		Let $k$ be an arbitrary natural number. Without loss of generality, we may assume that $\xi(k)\neq A_1$. It is sufficient to prove the following fact: There is a maximal chain $C = \{ i_0 \lhd i_1 \lhd \dots \lhd i_t\}$ and a number $r\leq t$ such that $\xi(k) = \bigcup_{l\leq r} A_{i_l} = A_{i_r}$. We build the desired chain $C$ step-by-step.
		
		Since $\xi(k) \neq A_1$, there is a chain $D_0$ inside $I$ such that $\psi_0^{D_0}(k)$ holds. We define $i_0$ as the least element of  $D_0$. Recall that Lemma~\ref{lem:main-prop} shows the following: if $x\in \nu^U(k)$ and $(k,y) \sqsubseteq (k,x)$, then $y$ also belongs to $\nu^U(k)$. Thus, we deduce that $F_{i_0} \subseteq \nu^U(k)$ and $A_{i_0} \subseteq \xi(k)$.
		
		The relation $\preceq^{\nu(k)}$ is a wellordering on $\nu(k)$. This implies that there is the $\preceq^{\nu(k)}$-least number $x_0\in \nu(k)$ such that $F_j \subseteq \widehat{[x_0]}_{\nu(k)}$ for some $\unlhd$-mi\-ni\-mal $j\in I$. Since $\psi_0^{D_0}(k)$ is true, this particular $\unlhd$-minimal $j$ must be equal to $i_0$. From this, we deduce the following: if a maximal chain $D'$ starts with an element $j'$ not equal to $i_0$, then $\psi^{D'}(x,k)$ is false for all $x$. Such chains $D'$ can be omitted from further consideration.
		
		Now assume that $i_l$ has been already defined, and we know the following:
		\begin{itemize}
			\item[(a)] $i_0 \lhd i_1 \lhd \dots \lhd i_l$;
			
			\item[(b)] $F_{i_l} \subseteq \nu^{U}(k)$ and $A_{i_l} \subseteq \xi(k)$;
			
			\item[(c)] For each $u\leq l$, if a maximal chain $D'$ does not start with $i_0 \lhd i_1 \lhd \dots \lhd i_u$, then $\psi_{q}^{D'}(k)$ is false for all $q\geq u$.
		\end{itemize}
		Consider the following cases:
		
		\emph{Case 1.} Assume that $C := \{ i_0 \lhd i_1 \lhd \dots \lhd i_l \}$ is already a maximal chain inside $I$. Then the items~(b) and~(c) together imply that $\xi(k) = \bigcup_{u\leq l} A_{i_u} = A_{i_l}$, and this finishes the construction.
		
		\emph{Case 2.} Suppose that $i_0 \lhd \dots \lhd i_l$ is not a maximal chain.
		
		\emph{Case 2.1.} Assume that for every $j \rhd i_l$, we have $F_{j} \not\subseteq \nu^U(k)$. Then Lemma~\ref{lem:main-prop} implies that for any maximal chain $D$ starting with $i_0 \lhd \dots \lhd i_l$, the formula $\psi^D_{l+1}(k)$ is false. Hence, one obtains that $\xi(k) = A_{i_l}$, and the desired $C$ can be chosen as an arbitrary maximal chain beginning with $i_0 \lhd \dots \lhd i_l$.
		
		\emph{Case 2.2.} Suppose that there is $j \rhd i_l$ with $F_{j} \subseteq \nu^U(k)$. Since $\preceq^{\nu(k)}$ is a wellordering, there is the $\preceq^{\nu(k)}$-least number $x_{l+1}\in \nu(k)$ such that $F_{j^{\ast}} \subseteq \widehat{[x_{l+1}]}_{\nu(k)}$ for some $\unlhd$-minimal $j^{\ast}$ from $\widetilde{(i_l)}$. Furthermore, this number $j^{\ast}$ is uniquely determined, and for any maximal chain $D$ starting with $i_0 \lhd \dots \lhd i_l \lhd j^{\ast}$, the formula $\psi^D_{l+1}(k)$ is true. We put $i_{l+1} := j^{\ast}$ and proceed to finding $i_{l+2}$. Clearly, the following holds:
		\begin{itemize}
			\item $i_{l+1}\rhd i_l$;
			
			\item $F_{i_{l+1}} \subseteq \nu^{U}(k)$ and $A_{i_{l+1}} \subseteq \xi(k)$;
			
			\item For every maximal chain $D'$ not beginning with $i_0 \lhd \dots \lhd i_l\lhd i_{l+1}$, the formulas $\psi^{D'}_{q}$, $q\geq l+1$, are false.
		\end{itemize}
		
		Since the poset $I$ is finite, the described construction finishes after finitely many iterations, and produces the desired maximal chain $C$ and number $i_r$ such that $\xi(k) = A_{i_r}$. In particular, every $\xi(k)$ belongs to $\mathcal{S}$.
		
		Now assume that $\nu^U(k)=A_j$ for some $j\leq m$. Clearly, if $j=1$, then $\xi(k)$ is also equal to $A_1$. Hence, we assume that $j\geq 2$. In this particular case, the construction above finishes only when one of the following two conditions is satisfied:
		\begin{enumerate}
			\item We found a $\unlhd$-maximal $i_l$ with $F_{i_l}\subseteq \nu^{U}(k)$. Then, clearly, $j=i_l$ and $\xi(k) = \nu^U(k)$.
			
			\item We found a number $i_l$ such that $F_{i_l} \subseteq \nu^U(k)$, but for every $q\rhd i_l$, the set $F_{q}$ is not a subset of $\nu^U(k)$. We deduce that $A_{i_l} \subseteq A_j$, and for any $q$, the condition $A_{q} \supsetneq A_{i_l}$ implies $A_{q} \neq A_j$. Hence, $j=i_l$ and $\xi(k) = A_{i_l} = \nu^{U}(k)$.
		\end{enumerate}
		Claim~\ref{claim:first} is proved.
	\end{proof}
	
	Recall that the numbering $\nu^U$ is universal. Thus, for any $E^1_n$-com\-pu\-ta\-ble numbering $\mu$ of the family $\mathcal{S}$, there is a computable function $f(x)$ such that $\mu(k) = \nu^U(f(k))$ for all $k$. Claim~\ref{claim:first} implies that $\mu(k) = \nu^U(f(k)) = \xi(f(k))$, and therefore, $\xi$ is a universal $E^1_n$-computable numbering of the family $\mathcal{S}$.
	
	
	\
	
	\underline{($\Rightarrow$).} Suppose that the family $\mathcal{S}$ has no least element under $\subseteq$. Without loss of generality, we may assume that $A_1,A_2,\dots,A_l$ (where $2\leq l\leq m$) are all $\subseteq$-minimal elements of $\mathcal{S}$.
	
	In order to prove the desired fact, it is sufficient to obtain the following: Given an arbitrary $E^1_n$-computable numbering $\nu$ of the family $\mathcal{S}$, one can construct a $E^1_n$-computable numbering $\mu$ of $\mathcal{S}$ such that $\mu\nleq \nu$.
	
	Fix indices $p_i$, $1\leq i\leq l$, such that $\nu(p_i) = A_i$. Let
	\[
		p_{\sigma(i)} := \begin{cases}
			p_{i+1}, & \text{if } i<l,\\
			p_1, & \text{if } i = l.
		\end{cases}
	\]

	For each non-zero $i\leq l$, consider a $E^1_n$ set
	\[
		Q_i := \{ k\in\omega \,\colon F_i \subseteq \nu(k)\}.
	\]
	Clearly, $\bigcup_{i\leq l} Q_i$ is equal to $\omega$ (recall that we picked \emph{all} $\subseteq$-minimal elements of $\mathcal{S}$).	By the reduction principle (Theorem~\ref{theo:reduction}), there are pairwise disjoint $E^1_n$ sets $R_i$, $i\leq l$, such that
	\[
		R_i \subseteq Q_i \text{ and } \bigcup_{i\leq l} R_i = \omega.
	\]
	Notice that in particular, every $R_i$ is a $\Delta^1_n$ set.
	
	We define a total function $f\colon\omega\to \{ p_i\,\colon 1\leq i\leq l\}$. Set
	\[
		f(k) := \begin{cases}
			p_{\sigma(i)}, & \text{if } \varphi_k(k)\!\downarrow \text{ and } \varphi_k(k)\in R_i,\\
			p_1, & \text{if } \varphi_k(k)\!\uparrow.
		\end{cases}
	\]
	Since every $z\in\omega$ belongs to precisely one set $R_i$, the function $f$ is well-de\-fi\-ned. Moreover, the graph of $f$ is a $\Delta^1_n$ set (recall that each $R_i$ is $\Delta^1_n$).
	
	We define the desired numbering $\mu$ as follows:
	\[
		\xi(k) := \nu(f(k)),\quad
		\mu := \nu \oplus \xi.
	\]
	First, note that $x\in \xi(k)$ iff $\exists t[ f(k) = t \,\&\, x\in \nu(t)]$. This shows that both numberings $\xi$ and $\mu$ are $E^1_n$-computable. Moreover, it is evident that $\mu$ indexes precisely the family $\mathcal{S}$.
	
	Towards a contradiction, assume that $\mu \leq \nu$. Then one can choose a total computable function $\varphi_e(x)$ with $\xi = \nu \circ \varphi_e$. Find the number $i\leq l$ such that $\varphi_e(e)$ belongs to $R_i$. Since $\varphi_e(e) \in R_i\subseteq Q_i$, the set $\xi(e) = \nu(\varphi_e(e))$ must contain $F_i$. On the other hand, we have $f(e) = p_{\sigma(i)}$ and $\xi(e) = \nu(f(e)) = A_{\sigma(i)}$. The set $A_{\sigma(i)}$ is $\subseteq$-minimal inside $\mathcal{S}$, and hence, $A_{\sigma(i)}\not\supseteq F_i$. This gives a contradiction, therefore, $\mu$ is not reducible to $\nu$. 	
	Theorem~\ref{theo:greatest} is proved.
\end{proof}

A careful analysis of the proof above (see also Section~\ref{sect:PD}) reveals the following:
\begin{corollary}
	For the classes $\Pi^1_1$ and $\Sigma^1_2$, Theorem~\ref{theo:greatest} holds even without assuming \textbf{PD}.
\end{corollary}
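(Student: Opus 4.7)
The plan is to re-examine each set-theoretic ingredient used in the proof of Theorem~\ref{theo:greatest} and verify that, when $n = 1$ or $n = 2$, every such ingredient is already available in $\mathbf{ZF}+\mathbf{DC}$ alone. A careful reading of that proof shows that \textbf{PD} enters in exactly two places: through Lemma~\ref{lem:main-prop} (used in the $(\Leftarrow)$ direction to build the numbering $\xi$ from the universal numbering $\nu_U$), and through the reduction principle Theorem~\ref{theo:reduction} (used in the $(\Rightarrow)$ direction to split $\omega$ into pairwise disjoint $\Delta^1_n$ pieces $R_i \subseteq Q_i$). Proposition~\ref{prop:univ-for-all} and Corollary~\ref{coroll:univ} were proved with no reference to \textbf{PD}, so the universal $E^1_n$-numbering $\nu_U$ is available for free in both cases.

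For the $(\Leftarrow)$ direction I would argue as follows. The proof of Lemma~\ref{lem:main-prop} uses only the existence of an $E^1_n$-norm on the graph $G_{\nu}\in E^1_n$; that is, it uses nothing beyond the prewellordering property of $E^1_n$. By the Remark following Theorem~\ref{theo:prewellord}, the classes $\Pi^1_1$ and $\Sigma^1_2$ possess the prewellordering property provably in $\mathbf{ZF}+\mathbf{DC}$ (Moschovakis, Theorems~4B.2 and 4B.3). Thus, for $n \in \{1,2\}$, Lemma~\ref{lem:main-prop} holds without \textbf{PD}, and the construction of the formulas $\psi^C_\ell$, $\psi^C$ and the numbering $\xi$ given in the proof of Theorem~\ref{theo:greatest}, together with the verification of Claim~\ref{claim:first}, go through verbatim.

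For the $(\Rightarrow)$ direction, the only non-trivial set-theoretic step is the application of the reduction principle to the $E^1_n$ sets $Q_i = \{k : F_i \subseteq \nu(k)\}$ in order to produce the disjoint cover $R_1,\dots,R_l$ of $\omega$ by $\Delta^1_n$ sets. By the Remark following Theorem~\ref{theo:reduction}, the reduction principle for $\Pi^1_1$ and $\Sigma^1_2$ is a theorem of $\mathbf{ZF}+\mathbf{DC}$ (Addison). With the $R_i$ in hand, the definition of the function $f$, the numbering $\xi(k) := \nu(f(k))$, and the diagonal argument showing $\mu = \nu \oplus \xi \not\leq \nu$, use only the fact that each $R_i$ is $\Delta^1_n$ — no additional set-theoretic hypotheses are needed.

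Since every appeal to \textbf{PD} in the proof of Theorem~\ref{theo:greatest} factors through one of these two principles, and both are available at the levels $\Pi^1_1$ and $\Sigma^1_2$ in $\mathbf{ZF}+\mathbf{DC}$, the corollary follows. I expect no genuine obstacle here: this is essentially a bookkeeping observation isolating the exact points at which determinacy is invoked, and the only writing task is to note explicitly that, for $n = 1,2$, each invocation of Lemma~\ref{lem:main-prop} and of Theorem~\ref{theo:reduction} in the proof of Theorem~\ref{theo:greatest} can be replaced by its classical (PD-free) counterpart.
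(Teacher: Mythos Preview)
Your proposal is correct and is precisely the ``careful analysis of the proof above'' that the paper alludes to in lieu of a written proof: you have identified that \textbf{PD} enters only via the prewellordering property (through Lemma~\ref{lem:main-prop}) and the reduction principle (Theorem~\ref{theo:reduction}), both of which hold for $\Pi^1_1$ and $\Sigma^1_2$ classically, exactly as recorded in the Remarks following Theorems~\ref{theo:prewellord} and~\ref{theo:reduction}.
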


Furthermore, by applying the operator $\mathrm{Dual}$ from Lemma~\ref{lem:dual}, one can prove:

\begin{corollary}[\textbf{PD}] \label{corol:PD-fin}
	Let $n$ be a non-zero natural number, and let $\mathcal{S}$ be a finite family of $\Upsilon^1_n$ sets.
	Then the semilattice $\mathcal{R}_{\Upsilon^1_n}(\mathcal{S})$ has the greatest element if and only if the family $\mathcal{S}$ contains a greatest element under $\subseteq$.
\end{corollary}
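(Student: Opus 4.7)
The plan is to obtain Corollary \ref{corol:PD-fin} as a direct transfer of Theorem \ref{theo:greatest} via the duality operator introduced in Lemma \ref{lem:dual}. First I would observe that a set $A \subseteq \omega$ belongs to $\Upsilon^1_n$ if and only if its complement $\omega \setminus A$ belongs to $E^1_n$, since $\Upsilon^1_n = \breve{E^1_n}$. Thus if $\mathcal{S}$ is a finite family of $\Upsilon^1_n$ sets, then $\mathrm{Dual}(\mathcal{S}) = \{\omega \setminus A : A \in \mathcal{S}\}$ is a finite family of $E^1_n$ sets, and it is $E^1_n$-computable (being finite, it is automatically so, as noted at the top of Section~\ref{sect:finite_fam}).

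Next I would invoke Lemma \ref{lem:dual} with $\Gamma = \Upsilon^1_n$, which gives an isomorphism
\[
	\mathcal{R}_{\Upsilon^1_n}(\mathcal{S}) \ \cong\ \mathcal{R}_{E^1_n}(\mathrm{Dual}(\mathcal{S})).
\]
Since having a greatest element is preserved under semilattice isomorphism, $\mathcal{R}_{\Upsilon^1_n}(\mathcal{S})$ has a greatest element if and only if $\mathcal{R}_{E^1_n}(\mathrm{Dual}(\mathcal{S}))$ does. By Theorem \ref{theo:greatest} (which uses \textbf{PD}), the latter holds if and only if $\mathrm{Dual}(\mathcal{S})$ has a $\subseteq$-least element.

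Finally I would translate this condition back to $\mathcal{S}$ using the elementary fact that complementation reverses set inclusion: $X \subseteq Y$ iff $\omega \setminus Y \subseteq \omega \setminus X$. Hence a set $B = \omega \setminus A$ is $\subseteq$-least in $\mathrm{Dual}(\mathcal{S})$ precisely when $A$ is $\subseteq$-greatest in $\mathcal{S}$. Chaining the equivalences yields the corollary. I do not expect any real obstacle here, since all the substance has been absorbed into Theorem \ref{theo:greatest} and Lemma \ref{lem:dual}; the only care needed is to verify that the duality lemma is applied in the correct direction (from $\Upsilon^1_n$ to its dual $E^1_n$) and that order-reversal under complementation is stated cleanly.
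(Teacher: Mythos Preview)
Your proposal is correct and follows exactly the route the paper intends: apply Lemma~\ref{lem:dual} with $\Gamma=\Upsilon^1_n$ to identify $\mathcal{R}_{\Upsilon^1_n}(\mathcal{S})$ with $\mathcal{R}_{E^1_n}(\mathrm{Dual}(\mathcal{S}))$, invoke Theorem~\ref{theo:greatest} for the latter, and use that complementation reverses $\subseteq$ to convert ``$\mathrm{Dual}(\mathcal{S})$ has a $\subseteq$-least element'' into ``$\mathcal{S}$ has a $\subseteq$-greatest element.'' The paper's own justification is just the one-line remark ``by applying the operator $\mathrm{Dual}$ from Lemma~\ref{lem:dual},'' so your write-up is simply a more detailed version of the same argument.
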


\subsection{Minimal covers}

The technique developed in the proof of Theorem~\ref{theo:greatest} allows us to obtain a further result, which deals with minimal covers in Rogers semilattices.

Let $\mathcal{A} = (A;\leq, \vee)$ be an upper semilattice, and $a$ be an element from $\mathcal{A}$. An element $b\in\mathcal{A}$ is called a \emph{minimal cover} of $a$ if $a<b$ and there is no $c$ with $a<c<b$.

\begin{theorem}[\textbf{PD}] \label{theo:min-cover}
	Let $n$ be a non-zero natural number.
	Let \[
		\mathcal{S} = \{ A_1, A_2,\dots, A_m\}\]
	be a finite family of $E^1_n$ sets such that $\mathcal{S}$ has no least element under $\subseteq$.
	Then every element from the Rogers semilattice $\mathcal{R}^1_n(\mathcal{S})$ has a minimal cover.
	In particular, $\mathcal{R}^1_n(\mathcal{S})$ is upwards dense.
\end{theorem}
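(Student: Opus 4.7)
The plan is to show that for any $E^1_n$-computable numbering $\nu$ of $\mathcal{S}$, the numbering $\mu := \nu \oplus \xi$ constructed in the $(\Rightarrow)$ direction of Theorem~\ref{theo:greatest} induces a \emph{minimal} cover of $[\nu]$, not merely a proper cover. I would reuse that setup: enumerate the $\subseteq$-minimal elements $A_1,\dots,A_l$ of $\mathcal{S}$ (with $l \geq 2$, since $\mathcal{S}$ has no least element), use Theorem~\ref{theo:reduction} to obtain pairwise disjoint $\Delta^1_n$ sets $R_1,\dots,R_l$ covering $\omega$ with $R_i \subseteq \{k : F_i \subseteq \nu(k)\}$, fix $\nu$-indices $p_i$ of $A_i$ together with a cyclic permutation $\sigma$ of $\{1,\dots,l\}$, and put
\[
 f(k) := \begin{cases} p_{\sigma(i)}, & \varphi_k(k){\downarrow}\in R_i,\\ p_1, & \text{otherwise}, \end{cases}
\]
$\xi := \nu \circ f$, $\mu := \nu \oplus \xi$. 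Theorem~\ref{theo:greatest} already gives $[\nu] < [\mu]$; the job is to upgrade ``proper cover'' to ``minimal cover.''

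For minimality, suppose $\eta$ is an $E^1_n$-computable numbering of $\mathcal{S}$ with $[\nu] \leq [\eta] \leq [\mu]$, and fix computable reductions $r$ with $\nu = \eta \circ r$ and $g$ with $\eta = \mu \circ g$. I would split $\omega$ into computable sets $C_0 = \{k : g(k)\text{ even}\}$ and $C_1 = \{k : g(k)\text{ odd}\}$, giving $\eta|_{C_0} = \nu \circ g_0$ and $\eta|_{C_1} = \xi \circ g_1 = \nu \circ f \circ g_1$ for computable $g_0,g_1$. The proof hinges on a dichotomy: either there is a total computable function $h_1 \colon C_1 \to \omega$ with $\nu \circ h_1 = \eta|_{C_1}$ (in which case combining $g_0/2$ on $C_0$ with $h_1$ on $C_1$ produces $\eta \leq \nu$ and $[\eta] = [\nu]$), or no such $h_1$ exists, in which case the self-referential structure of $f$, together with the reduction $r$ already in hand, lets one construct a computable $h \colon \omega \to \omega$ with $\xi = \eta \circ h$, giving $\mu = \nu \oplus \xi \leq \eta$ and $[\eta] = [\mu]$.

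The main obstacle is formalizing the second horn. I anticipate using the recursion theorem in a style analogous to the $(\Rightarrow)$ direction of Theorem~\ref{theo:greatest}: a hypothetical computable reduction $\eta \leq \nu$ on $C_1$ would have to commit, at each $k \in C_1$, to a specific $\nu$-index $h_1(k) \in \{p_1,\dots,p_l\}$, and the self-reference in $\varphi_{g_1(k)}(g_1(k))$ can be diagonalized against (leveraging the $\Delta^1_n$-ness of the $R_i$'s supplied by the reduction principle) to exhibit a $k^*$ where $h_1$ must fail. The plan is to translate this failure \emph{backwards} through $r$ and $g$: the fact that at $k^*$ we find $\varphi_{g_1(k^*)}(g_1(k^*))\in R_{i^*}$ for a specific $i^*$, combined with the indices $r(p_{\sigma(i^*)})$ supplied by $r$, should yield, uniformly in $n$, a computable way to locate an $\eta$-index of $\xi(n) = A_{\sigma(i_n)}$, providing the desired reduction $\xi \leq \eta$. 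Carrying out this ``failure-to-reduction'' translation cleanly, while keeping all computations uniform and staying within the $E^1_n$ hierarchy, is the principal technical challenge. Once the dichotomy is established, $[\mu]$ is a minimal cover of $[\nu]$; since $\nu$ was arbitrary, $\mathcal{R}^1_n(\mathcal{S})$ has a minimal cover above every element and, in particular, no maximal element, so it is upwards dense.
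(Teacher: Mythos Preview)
Your proposal has a genuine gap in the second horn of the dichotomy. You correctly set up the split $\eta|_{C_0} \leq \nu$ and $\eta|_{C_1} \leq \xi$, but the passage from ``$\eta|_{C_1}$ admits no computable reduction to $\nu$'' to ``$\xi \leq \eta$'' is not an argument --- it is a hope. The self-reference $\varphi_k(k)$ built into $f$ is a diagonal \emph{against} potential reductions $\xi \to \nu$; it carries no positive information that would let you uniformly locate $\eta$-indices for the values $\xi(n)$. Nothing in the construction of $\xi$ rules out an $\eta_1 \leq \xi$ with $\eta_1 \nleq \nu$ yet $\xi \nleq \nu \oplus \eta_1$; padding such an $\eta_1$ out to a numbering of all of $\mathcal{S}$ would place it strictly between $\nu$ and $\mu$. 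The ``failure-to-reduction translation'' you flag as the principal technical challenge is in fact the entire content of minimality, and there is no mechanism in your construction that forces it to hold for this particular $\mu$.

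The paper avoids this by \emph{not} reusing the $\mu$ from Theorem~\ref{theo:greatest}. Instead it first proves a standalone lemma (Proposition~\ref{prop:auxil}): whenever there is a function $f$ with $\Delta^1_n$ graph satisfying $\nu(k) \neq \nu(f(k))$ for all $k$, the numbering $\nu$ has a minimal cover. The cover in that lemma is built using a \emph{maximal} c.e.\ set $M$, with the diagonalized values $\nu(f(\varphi_e(m_e)))$ placed only on the complement $\overline{M} = \{m_0 < m_1 < \dots\}$. Maximality of $M$ is exactly what supplies the dichotomy you were reaching for by hand: for any intermediate $\xi = \mu \circ h$, either $range(h)\setminus M$ is finite (forcing $\xi \leq \nu$) or $\overline{M} \setminus range(h)$ is finite (forcing $\mu \leq \xi$). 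With Proposition~\ref{prop:auxil} in hand, the theorem follows by taking the much simpler $f(k) := p_{\sigma(i)}$ for $k \in R_i$ --- no self-reference is needed at this stage, since the diagonalization is handled inside the proposition via the maximal set.
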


Before giving the proof of the theorem, we obtain the following auxiliary fact:

\begin{proposition}\label{prop:auxil}
	Let $\mathcal{T}$ be an arbitrary $E^1_n$-computable family, and let $\nu$ be a $E^1_n$-computable numbering of $\mathcal{T}$. Suppose that there exists a total function $f\colon \omega \to \omega$ such that:
	\begin{itemize}
		\item the graph of $f$ is $\Delta^1_n$, and
		
		\item $\nu(k) \neq \nu(f(k))$ for all $k\in\omega$.
	\end{itemize}
	Then the (degree of the) numbering $\nu$ has a minimal cover inside $\mathcal{R}^1_n(\mathcal{T})$.
\end{proposition}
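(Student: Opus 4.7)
The plan is to construct an $E^1_n$-computable numbering $\mu$ of $\mathcal{T}$ with $\nu<\mu$ in $\mathcal{R}^1_n(\mathcal{T})$ such that no numbering lies strictly between $\nu$ and $\mu$. I would take $\mu:=\nu\oplus\xi$, where $\xi$ is an $E^1_n$-computable numbering of a subfamily of $\mathcal{T}$ (so $\mu$ still indexes exactly $\mathcal{T}$), built by a diagonal construction that exploits the hypothesised ``anti-identity'' property $\nu(k)\neq\nu(f(k))$.

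Concretely, for each $k$, let $e:=\langle k\rangle_0$ and set
\[
\xi(k):=\nu(h(k)),\qquad h(k):=\begin{cases} f(\varphi_{e}(2k+1)), & \text{if } \varphi_{e}(2k+1)\!\downarrow,\\ 0, & \text{otherwise}.\end{cases}
\]
Since ``$\varphi_e(m)\!\downarrow\,=v$'' is $\Sigma^0_1$ (hence $\Delta^1_n$) and the graph of $f$ is $\Delta^1_n$, the graph of $h$ is $\Delta^1_n$; therefore the relation ``$x\in\xi(k)$'' is equivalent to $(\exists t)[h(k)=t\ \&\ x\in\nu(t)]$, which is $E^1_n$. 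Hence $\xi$, and consequently $\mu$, is $E^1_n$-computable, with each $\xi(k)$ an element of $\mathcal{T}$.

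To establish $\mu\not\leq\nu$, suppose towards contradiction that some $\varphi_e$ witnessed such a reduction. Pick $k^{\ast}:=\langle e,0\rangle$; then $\langle k^{\ast}\rangle_0=e$, and since $\varphi_e$ is total, $h(k^{\ast})=f(\varphi_e(2k^{\ast}+1))$, giving
\[
\nu(f(\varphi_e(2k^{\ast}+1)))=\xi(k^{\ast})=\mu(2k^{\ast}+1)=\nu(\varphi_e(2k^{\ast}+1)),
\]
which contradicts $\nu(m)\neq\nu(f(m))$ applied at $m:=\varphi_e(2k^{\ast}+1)$. Thus $\nu<\mu$.

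The main obstacle is proving minimality: given any $E^1_n$-computable $\eta$ with $\nu\leq\eta\leq\mu$, one must show that either $\eta\leq\nu$ or $\mu\leq\eta$. Given a computable $g$ with $\eta=\mu\circ g$, partition $\omega$ into the computable sets $R_0:=g^{-1}(2\omega)$ and $R_1:=g^{-1}(2\omega+1)$. The dichotomy to be forced is: either $g$ restricted to $R_1$ produces no set beyond those already accessed by $g$ on $R_0$ through $\nu$, in which case one reroutes the $R_1$-indices through $\nu$ to conclude $\eta\leq\nu$; or the $\xi$-side is ``used enough'' that the internal coding built into $\xi$ lets one decode a listing of \emph{all} $\mu$-indices from the indices of $\eta$, yielding $\mu\leq\eta$. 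Making this dichotomy precise is the technical heart of the proof; it will likely require refining $\xi$ so that each column simultaneously carries the diagonalisation witness against the potential $\varphi_e$ \emph{and} enough coded copies of $\nu$-indices, and then invoking Lemma~\ref{lem:main-prop} applied to $\eta$ to perform a well-founded stage-by-stage reconstruction of the reduction $\mu\leq\eta$ along the prewellordering of some $\eta$-indexed set.
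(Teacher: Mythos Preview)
Your diagonalization is fine and does secure $\nu<\mu$, but the plan for minimality has a genuine gap: nothing in your construction forces the dichotomy ``$\eta\leq\nu$ or $\mu\leq\eta$.'' With $\mu=\nu\oplus\xi$ as you define it, one can simply take a computable subset $S\subseteq\omega$ that hits infinitely many columns of $\xi$ but misses infinitely many others, set $\xi'=\xi\circ(\text{enumeration of }S)$, and consider $\eta:=\nu\oplus\xi'$. Then $\nu\leq\eta\leq\mu$, yet there is no reason $\eta\equiv\nu$ (the columns you kept still diagonalize against the corresponding $\varphi_e$'s) nor $\eta\equiv\mu$ (the columns you dropped are not recoverable from $\eta$ in any evident way). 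Your appeal to Lemma~\ref{lem:main-prop} is misplaced: that lemma gives a prewellordering of the \emph{elements} inside each set $\nu(k)$, not of the \emph{indices} of the numbering, so it cannot manufacture the required dichotomy on $\mathrm{range}(g)$.

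The missing idea, which the paper supplies, is to route the construction through a \emph{maximal} c.e.\ set $M$. One defines $\mu$ directly (not as $\nu\oplus\xi$) so that on $M$ it copies $\nu$ via a computable bijection $g\colon\omega\to M$, while on $\overline{M}=\{m_0<m_1<\dots\}$ it diagonalizes: $\mu(m_e)=\nu(f(\varphi_e(m_e)))$ if $\varphi_e(m_e)\!\downarrow$, and $\nu(0)$ otherwise. Now if $\nu\leq\eta\leq\mu$ via $\eta=\mu\circ h$, maximality of $M$ forces either $\mathrm{range}(h)\setminus M$ finite (giving $\eta\leq\nu$ after finitely many non-uniform patches) or $\overline{M}\setminus\mathrm{range}(h)$ finite (giving $\mu\leq\eta$). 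This is a purely classical recursion-theoretic device; no \textbf{PD}, no prewellordering, and no Lemma~\ref{lem:main-prop} are needed for this proposition.
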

\begin{proof}
	The proof mimics the idea from Theorem~2 of Ref.~\cite{BP-02}. Let $M$ be a maximal computably enumerable set. Fix a total computable, injective function $g$ such that $range(g) = M$. Assume that $\overline{M}:=\omega\setminus M = \{ m_0 <_{\omega} m_1 <_{\omega} m_2 <_{\omega} \dots\}$.
	
	Define a numbering $\mu$ as follows:
	\[
		\mu(k) := \begin{cases}
			\nu (g^{-1}(k)), & \text{if } k\in M,\\
			\nu(0), & \text{if } k = m_e \text{ and } k\not\in dom(\varphi_e),\\
			\nu(f(\varphi_e(k))), & \text{if } k = m_e \text{ and } k \in dom(\varphi_e).
		\end{cases}
	\]
	Via standard counting of quantifiers, one can see that the numbering $\mu$ is $E^1_n$-computable: e.g., the condition $x\in \nu(f(\varphi_e(k)))$ is equivalent to a $E^1_n$ formula
	\[
		\exists u \exists v[ \varphi_e(k)\!\downarrow\ =u \,\&\, f(u) = v \,\&\,  x\in \nu(v)].
	\]
	
	Clearly, $\nu$ is equal to $\mu\circ g$; hence, $\nu \leq \mu$ and $\mu$ indexes precisely the family $\mathcal{T}$. Assume that a total computable function $\varphi_e$ reduces $\mu$ to $\nu$. Then $\mu(m_e) = \nu(\varphi_e(m_e))$ and on the other hand, $\mu(m_e) = \nu(f(\varphi_e(m_e))) \neq \nu(\varphi_e(m_e))$, which gives a contradiction. Thus, we deduce that $\mu$ is not reducible to $\nu$.
	
	Now assume that $\xi$ is a numbering of $\mathcal{T}$ such that $\nu \leq \xi \leq \mu$. Fix total computable functions $h$ and $p$ such that $\xi = \mu\circ h$ and $\nu = \xi\circ p$. Since the set $M$ is maximal, one of the following two cases holds:
	
	\emph{Case 1.} The set $range(h)\setminus M$ is finite. Assume that $range(h)\setminus M = \{ b_0,b_1,\dots,b_t\}$ and choose $\nu$-indices $a_i$, $i\leq t$, such that $\nu(a_i) = \mu(b_i)$. Define a computable function
	\[
		q(k) := \begin{cases}
			a_i, & \text{if } h(k) = b_i \text{ for some } i\leq t,\\
			g^{-1}(h(k)), & \text{otherwise}.
		\end{cases}
	\]
	The function $q$ is well-defined, and it reduces $\xi$ to $\nu$; hence, $\xi \equiv \nu$.
	
	\emph{Case 2.} The set $\overline{M}\setminus range(h)$ is finite. Assume that $\overline{M}\setminus range(h) = \{ c_0,c_1,\dots,c_t\}$, and choose $\xi$-indices $d_i$, $i \leq t$ such that $\xi(d_i) = \mu(c_i)$. Define a computable function $q$ according to the following rules:
	\begin{itemize}
		\item[(a)] If $k = c_i$, then set $q(k) := d_i$.
		
		\item[(b)] Otherwise, find the least step $s$ such that one of the following holds:
		\begin{itemize}
			\item[(b.1)] There is a number $l$ such that $h(l)[s]\!\downarrow\ = k$. Then set $q(k):=l$.
			
			\item[(b.2)] The number $k$ belongs to $M[s]$. Then set $q(k):= p(g^{-1}(k))$.
		\end{itemize}
	\end{itemize}
	It is not hard to show that the function $q$ is well-defined, and $\mu = \xi\circ q$, hence $\xi\equiv \mu$.
	
	Summarizing, we showed that there are no numberings strictly between $\nu$ and $\mu$, and hence, $\mu$ is a mininal cover for $\nu$. Proposition~\ref{prop:auxil} is proved.
\end{proof}

\begin{proof}[Proof of Theorem~\ref{theo:min-cover}]
We follow the notations employed by the proof of the direction $(\Rightarrow)$ in Theorem~\ref{theo:greatest}: The sets $A_1,A_2,\dots,A_l$ are chosen as all $\subseteq$-mi\-ni\-mal elements from $\mathcal{S}$. Given a $E^1_n$-computable numbering $\nu$ of $\mathcal{S}$, we introduce precisely the same objects $p_i$, $p_{\sigma(i)}$, $Q_i$, and $R_i$ as in Theorem~\ref{theo:greatest}.

We define a total function $f\colon \omega \to \omega$ as follows:
\[
	f(k) := p_{\sigma(i)}, \text{ if } k \in R_i.
\]
Clearly, $f$ is well-defined, and the graph of $f$ is $\Delta^1_n$.

Suppose that $k\in R_i$. Then $A_i \subseteq \nu(k)$. On the other hand, $\nu(f(k)) = \nu(p_{\sigma(i)}) = A_{\sigma(i)}$ and hence, $\nu(f(k))\not\supseteq A_i$. Thus, $\nu(f(k)) \neq \nu(k)$. By Proposition~\ref{prop:auxil}, we deduce that the numbering $\nu$ has a minimal cover inside $\mathcal{R}^1_n(\mathcal{S})$. Theorem~\ref{theo:min-cover} is proved.
\end{proof}


\section{Rogers semilattices of infinite families} \label{sect:inf_fam}

In this section, we discuss elementary properties of the semilattices $\mathcal{R}^1_n(\mathcal{S})$, where $\mathcal{S}$ is infinite. The first property, which differs from the results of Section~\ref{sect:finite_fam}, is the following fact: $\mathcal{R}^1_n(\mathcal{S})$ never contains the least element, as witnessed by the result of Dorzhieva:

\begin{proposition}[Dorzhieva, Corollary~1 in Ref.~\cite{Dor-16}] \label{prop:Dor}
	Let $n$ be a non-zero natural number, and let $\mathcal{S}$ be an infinite $\Pi^1_n$-computable family. Then the Rogers semilattice $\mathcal{R}_{\Pi^1_n}(\mathcal{S})$ contains infinitely many minimal elements.
\end{proposition}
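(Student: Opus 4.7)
The plan is to establish the proposition via three ingredients: first, that every Friedberg (i.e.\ one-to-one) $\Pi^1_n$-computable numbering induces a minimal element of $\mathcal{R}_{\Pi^1_n}(\mathcal{S})$; second, that the family $\mathcal{S}$ admits a Friedberg $\Pi^1_n$-computable numbering; and third, that one can produce infinitely many pairwise non-equivalent such numberings.

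The first step is purely combinatorial. Suppose $\nu$ is a Friedberg $\Pi^1_n$-computable numbering of $\mathcal{S}$ and $\mu$ is any $\Pi^1_n$-computable numbering of $\mathcal{S}$ with $\mu \leq \nu$ via a total computable $f$, so $\nu \circ f = \mu$. Because $\mu$ maps onto $\mathcal{S}$ and $\nu$ is injective with range $\mathcal{S}$, the function $f$ must be surjective: for any $k$, picking $j$ with $\mu(j) = \nu(k)$ gives $\nu(f(j)) = \nu(k)$ and hence $f(j) = k$. The function $g(k) := \min\{j : f(j) = k\}$ is then total computable and satisfies $\mu(g(k)) = \nu(k)$, so $\nu \leq \mu$ and therefore $\mu \equiv \nu$. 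This argument does not use the particular complexity class.

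The second step is the construction of a Friedberg $\Pi^1_n$-computable numbering. Starting from an arbitrary $\Pi^1_n$-computable numbering $\mu$ of $\mathcal{S}$ (which exists by assumption), the goal is to produce a computable injection $h\colon\omega\to\omega$ such that $\nu := \mu \circ h$ is still a numbering of $\mathcal{S}$ but with no repetitions, i.e.\ $\mu(h(i)) \neq \mu(h(j))$ for $i \neq j$ and $\{\mu(h(i)) : i \in \omega\} = \mathcal{S}$. Since $\mathcal{S}$ is infinite, there is sufficient material. I expect Dorzhieva's construction to proceed by selecting indices in a way that exploits the universal $\Pi^1_n$-computable numbering from Corollary~\ref{coroll:univ} to code the bookkeeping at the index level rather than at the set level.

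Finally, having one Friedberg numbering $\nu^{(0)}$, I would construct inductively a sequence $\nu^{(0)},\nu^{(1)},\nu^{(2)},\ldots$ of Friedberg numberings, each new one obtained from the previous ones by a diagonal modification ensuring $\nu^{(m+1)} \not\leq \nu^{(i)}$ for every $i \leq m$. Because every Friedberg numbering is minimal by the first step, pairwise non-reducibility is equivalent to pairwise non-equivalence, and the induced degrees in $\mathcal{R}_{\Pi^1_n}(\mathcal{S})$ are distinct minimal elements. The main obstacle is the second step: the classical priority-style procedure for thinning out repetitions in a computable enumeration is unavailable here, since the relation $\{\langle i,j\rangle : \mu(i) = \mu(j)\}$ for a $\Pi^1_n$-computable $\mu$ lies strictly above $\Pi^1_n$ in the analytical hierarchy. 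Circumventing this requires either exploiting a universal numbering to perform the decomposition indirectly, or appealing to a structural property of $\Pi^1_n$ that is available without further set-theoretic hypotheses.
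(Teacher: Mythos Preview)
The paper does not prove this proposition itself; it is quoted from Dorzhieva~\cite{Dor-16} without argument, so there is no in-paper proof to compare against.

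Your plan has a genuine gap at the second step, which you yourself flag as the main obstacle: you do not construct a Friedberg $\Pi^1_n$-computable numbering of $\mathcal{S}$, and you offer only speculation about how such a construction might go. The difficulty is real. In the classical $\Sigma^0_1$ setting there exist infinite computable families admitting \emph{no} Friedberg numbering, so the bare existence of a Friedberg $\Pi^1_n$-numbering for an arbitrary infinite $\Pi^1_n$-computable family is not a routine lemma but a claim requiring its own substantial argument (indeed the paper cites separate work of Dorzhieva~\cite{Dor-14,Dor-18} on exactly this question). As written, your proposal reduces the target to a statement that is at least as hard, and possibly not even true in full generality without further hypotheses.

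A route that avoids Friedberg numberings entirely, and that the present paper itself deploys inside the proof of Theorem~\ref{theo:not-weak-distr}, builds minimal numberings from maximal c.e.\ sets: given any $\Pi^1_n$-computable numbering $\alpha$ of $\mathcal{S}$, a maximal set $M$ with complement $\{m_0<_\omega m_1<_\omega\cdots\}$, and a fixed $A\in\mathcal{S}$, set $\beta(m_e)=\alpha(e)$ and $\beta(k)=A$ for $k\in M$. Maximality of $M$ forces $\beta$ to be minimal in $\mathcal{R}_{\Pi^1_n}(\mathcal{S})$. Producing infinitely many pairwise non-equivalent such $\beta$ is then a matter of varying the parameters, and this is much closer to the style of argument one finds in~\cite{Dor-16}.
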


Note that Lemma~\ref{lem:dual} implies the following: if one replaces $\Pi^1_n$ by $\Sigma^1_n$, then Proposition~\ref{prop:Dor} still stays true. Moreover, by combining Propositions~\ref{prop:inf-not-lat} and~\ref{prop:Dor}, we obtain:

\begin{corollary}\label{cor:Dor}
	Let $\mathcal{S}$ be an arbitrary $E^1_n$-computable family, which contains at least two elements. Then the Rogers semilattice $\mathcal{R}^1_n(\mathcal{S})$ is infinite, and it is not a lattice.
\end{corollary}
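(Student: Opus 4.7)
The plan is to split on whether $\mathcal{S}$ is finite or infinite, and in each case invoke the appropriate earlier result, patching in one small order-theoretic observation at the end.

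First I would handle the case in which $\mathcal{S}$ is finite. Since $\mathcal{S}$ contains at least two elements, Proposition~\ref{prop:inf-not-lat} applies verbatim and gives both conclusions (infinite and not a lattice). No extra work is needed here.

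Next I would handle the case in which $\mathcal{S}$ is infinite. If $n$ is odd, then $E^1_n = \Pi^1_n$ and Proposition~\ref{prop:Dor} directly yields that $\mathcal{R}^1_n(\mathcal{S})$ contains infinitely many minimal elements. If $n$ is even, then $E^1_n = \Sigma^1_n$; here I would apply Lemma~\ref{lem:dual} to pass from $\mathcal{S}$ to $Dual(\mathcal{S})$, which is an infinite $\Pi^1_n$-computable family. Proposition~\ref{prop:Dor} gives infinitely many minimal elements in $\mathcal{R}_{\Pi^1_n}(Dual(\mathcal{S}))$, and since this semilattice is isomorphic to $\mathcal{R}^1_n(\mathcal{S})$, the same conclusion transfers. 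In particular $\mathcal{R}^1_n(\mathcal{S})$ is infinite.

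Finally I would record the simple order-theoretic fact that a semilattice with at least two distinct minimal elements cannot be a lattice: if $a \neq b$ are both minimal and an infimum $a \wedge b$ existed, then $a \wedge b \leq a$ would force $a \wedge b = a$ by minimality of $a$, hence $a \leq b$; symmetrically $b \leq a$, contradicting $a \neq b$. Since Proposition~\ref{prop:Dor} supplies not just two but infinitely many minimal elements, this shows $\mathcal{R}^1_n(\mathcal{S})$ is not a lattice, completing the infinite case.

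I do not foresee a substantive obstacle: the corollary is essentially bookkeeping on top of Propositions~\ref{prop:inf-not-lat} and~\ref{prop:Dor}. The only mildly non-routine point is the duality step for even $n$, which is exactly what Lemma~\ref{lem:dual} is designed to handle, and the tiny lemma that two distinct minimal elements preclude the existence of all binary meets.
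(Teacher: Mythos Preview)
Your proposal is correct and matches the paper's approach: the paper simply states that the corollary follows ``by combining Propositions~\ref{prop:inf-not-lat} and~\ref{prop:Dor}'' (together with the remark that Lemma~\ref{lem:dual} transfers Proposition~\ref{prop:Dor} from $\Pi^1_n$ to $\Sigma^1_n$), without spelling out the details. Your write-up is a faithful expansion of exactly that argument, including the small lattice-theoretic observation about distinct minimal elements that the paper leaves implicit.
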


Furthermore, Dorzhieva extended a result of Podzorov (Theorem~1 of Ref.~\cite{Podz-03}) and proved the following:

\begin{proposition}[Dorzhieva, a part of Lemma~1 in Ref.~\cite{Dor-16}] \label{prop:ideal-with-no-min}
	Let $\mathcal{S}$ be an infinite $\Pi^1_n$-computable family. Then there is a principal ideal $\mathcal{I}$ inside $\mathcal{R}_{\Pi^1_n}(\mathcal{S})$, which is isomorphic to the upper semilattice $\mathcal{E}^{\ast} \setminus \{ \bot\}$ (i.e. the semilattice of all c.e. sets under set-theoretic almost-inclusion $\subseteq^{\ast}$, with the least element omitted).
\end{proposition}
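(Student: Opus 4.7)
The plan is to adapt the classical Podzorov embedding (Theorem~1 of Ref.~\cite{Podz-03}), which realizes $\mathcal{E}^{\ast}\setminus\{\bot\}$ as a principal ideal in a Rogers arithmetical semilattice, to the $\Pi^1_n$ setting, using the prewellordering and reduction tools of Section~\ref{sect:PD} as the $\Pi^1_n$-substitutes for the uniform arithmetical approximations available in the classical proof. Fix a $\Pi^1_n$-computable numbering $\nu_0$ of $\mathcal{S}$. Since $\mathcal{S}$ is infinite, the first step is to extract a sequence of pairwise distinct elements $B_0, B_1, B_2, \ldots$ of $\mathcal{S}$ with $\Delta^1_n$-indices $e_i$ satisfying $\nu_0(e_i) = B_i$; this uses Lemma~\ref{lem:main-prop} together with the prewellordering on $G_{\nu_0}$ to effectively decide which $\nu_0$-indices point to the same set, so that duplicates can be skipped.

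Next, for each c.e. set $W \subseteq \omega$ I would build a $\Pi^1_n$-computable numbering $\mu_W$ of $\mathcal{S}$ of the following shape. Split $\omega$ computably into a ``base'' part $B$ and a ``decoration'' part $D = \bigsqcup_{i \in \omega} O_i$. On $B$, let $\mu_W$ run $\nu_0$, which ensures surjectivity onto $\mathcal{S}$. On $O_i$, at position $\langle i, s\rangle$, let $\mu_W$ output $B_i$ if $i$ has entered $W$ by stage $s$, and a fixed default $B_{\ast} \in \mathcal{S}$ otherwise. Since ``$i$ has entered $W$ by stage $s$'' is a $\Sigma^0_1$ condition and the membership conditions $x \in B_i$ come from the $\Pi^1_n$ graph $G_{\nu_0}$ through the $\Delta^1_n$ indices $e_i$, the overall graph $G_{\mu_W}$ lies in $\Pi^1_n$. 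I would then verify the two key reducibility lemmas: (i)~$W \subseteq^{\ast} V \Rightarrow \mu_W \leq \mu_V$, via an explicit computable translation that matches base to base and sends each $O_i$ into $\mu_V$'s $O_i$, rerouting the finitely many offending $i \in W \setminus V$ through fixed base-indices for $B_i$ in $\mu_V$; and (ii)~$\mu_W \leq \mu_V \Rightarrow W \subseteq^{\ast} V$, by arguing that any reducing function must send almost all $\mu_W$-positions outputting $B_i$ (for $i \in W$) into $\mu_V$-positions outputting $B_i$, which for $i \notin V$ live only in the base, and the distinctness of the $B_i$'s combined with the $\Delta^1_n$ extraction of the $e_i$'s yields the required almost-containment.

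Setting $W = \omega$ gives the ``top'' numbering $\mu_\omega$; the principal ideal $\mathcal{I}$ generated by $[\mu_\omega]$ is the target. The map $[\mu_W] \mapsto [W]_{=^{\ast}}$ restricted to infinite c.e. $W$ embeds into $\mathcal{E}^{\ast}\setminus\{\bot\}$ as a $\vee$-homomorphism, since $\mu_W \oplus \mu_V \equiv \mu_{W \cup V}$ is straightforward. The main obstacle is surjectivity: every $\xi$ with $\xi \leq \mu_\omega$ must be shown equivalent to $\mu_{W_\xi}$ for some infinite c.e. $W_\xi$. My plan is to set $W_\xi := \{ i : \text{some } \xi\text{-index is sent by the reducing function into the block } O_i \text{ of } \mu_\omega\}$, which is visibly c.e., and then to construct the reverse reduction $\mu_{W_\xi} \leq \xi$ by a Proposition~\ref{prop:auxil}-style index repartitioning, relying on Lemma~\ref{lem:main-prop} to substitute for the finite stagewise approximations the classical proof uses freely. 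Infiniteness of $W_\xi$ is forced by the fact that $\xi$, being onto $\mathcal{S}$, must enumerate every $B_i$ through some route. This repartitioning corresponds to the most delicate normalization step in Podzorov's original argument, and is where the $\Pi^1_n$-machinery from Section~\ref{sect:PD} carries most of the weight.
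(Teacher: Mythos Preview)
The paper does not supply a proof of this proposition; it is quoted from Dorzhieva~\cite{Dor-16} (who extends Podzorov~\cite{Podz-03}) and then used as a black box in the proof of Theorem~\ref{theo:not-weak-distr}. So there is no in-paper argument to match, and I can only assess your outline on its own terms.

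Your construction has a structural gap coming from the ``base'' block. Since every $\mu_W$ runs a verbatim copy of $\nu_0$ on $B$, you get $\nu_0 \leq \mu_W$ for all c.e.\ $W$, and hence the principal ideal below $[\mu_\omega]$ contains the entire ideal below $[\nu_0]$. If $\nu_0$ is not minimal, any $\xi$ with $\xi < \nu_0$ lies in that ideal but cannot be of the form $[\mu_W]$ (each $\mu_W$ dominates $\nu_0$), so your surjectivity step fails outright. If $\nu_0$ \emph{is} minimal, then $[\nu_0]$ is the least element of the ideal, whereas $\mathcal{E}^{\ast}\setminus\{\bot\}$ has no minimal elements at all (every infinite c.e.\ set splits computably into two infinite c.e.\ halves). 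Either way the base block is fatal. It also undercuts your direction~(ii): once a $\nu_0$-index for each $B_i$ sits inside every $\mu_V$, a reduction can route all of $O_i$ to that single base index, so $\mu_W\leq\mu_V$ can hold with $W\setminus V$ infinite whenever the assignment $i\mapsto e_i$ is computable~--- and it is computable if, for instance, $\nu_0$ is already Friedberg, in which case your entire family $\{\mu_W\}$ collapses to the single degree $[\nu_0]$. The Podzorov--Dorzhieva construction does not carry a fixed copy of a numbering inside every $\mu_W$; surjectivity onto $\mathcal{S}$ is arranged by a different mechanism, and that is the idea missing here. A smaller point: the proposition carries no \textbf{PD} hypothesis, and Dorzhieva's argument needs none, so your invocation of Lemma~\ref{lem:main-prop} and the prewellordering machinery of Section~\ref{sect:PD} would at best yield a \textbf{PD}-conditional version of the statement.
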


We make use of Proposition~\ref{prop:ideal-with-no-min} and exhibit a further elementary property, which differs from Section~\ref{sect:finite_fam}.

\begin{definition}[Definition~3.2 and Proposition~3.2 of Ref.~\cite{BGS-03-isom-3}]
	Let $\mathcal{A} = (A;\leq,\vee)$ be an upper semilattice. We say that $\mathcal{A}$ is \emph{weakly distributive} if it satisfies the following: if one adds an external least element $\bot$ to $\mathcal{A}$, then the resulting structure is a distributive upper semilattice. Equivalently, $\mathcal{A}$ is weakly distributive if and only if for all $b,a_0,a_1\in\mathcal{A}$, the following holds:
	\begin{multline*}
		(b\leq a_0 \vee a_1)\,\&\,(b\nleq a_0)\,\&\,(b\nleq a_1) \ \Rightarrow\\
		\exists b_0 \exists b_1 [ (b = b_0 \vee b_1) \,\&\, (b_0 \leq a_0) \,\&\,  (b_1 \leq a_1)].
	\end{multline*}
\end{definition}

\begin{theorem}\label{theo:not-weak-distr}
	Let $n$ be a non-zero natural number, and let $\mathcal{S}$ be an infinite $E^1_n$-computable family. Then the semilattice $\mathcal{R}^1_n(\mathcal{S})$ is not weakly distributive.
\end{theorem}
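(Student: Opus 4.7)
The plan is to apply Proposition~\ref{prop:ideal-with-no-min} together with Proposition~\ref{prop:Dor} in order to exhibit a concrete triple $(b, a_0, a_1)$ in $\mathcal{R}^1_n(\mathcal{S})$ that witnesses the failure of weak distributivity. Using Lemma~\ref{lem:dual}, I would first reduce to the case $E^1_n = \Pi^1_n$; the $\Sigma^1_n$ case then follows by applying the construction to $\mathrm{Dual}(\mathcal{S})$ and transporting the counterexample across the isomorphism of Rogers semilattices.

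Proposition~\ref{prop:ideal-with-no-min} supplies a principal ideal $\mathcal{I} \subseteq \mathcal{R}^1_n(\mathcal{S})$ isomorphic, as an upper semilattice, to $\mathcal{E}^{\ast}\setminus\{\bot\}$; let $u$ denote its top. By Friedberg's splitting theorem, $\mathcal{E}^{\ast}$ has no atoms, so $\mathcal{I}$ contains no minimal element at all. On the other hand, Proposition~\ref{prop:Dor} provides (infinitely many) minimal elements in the full semilattice $\mathcal{R}^1_n(\mathcal{S})$, and every such minimal element $m$ must lie outside $\mathcal{I}$, since otherwise $m$ would also be minimal inside $\mathcal{I}$. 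Fix one such $m$.

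The core of the argument is then to choose $v \in \mathcal{I}$ with $v < u$ satisfying $u \leq v \vee m$ in $\mathcal{R}^1_n(\mathcal{S})$, and to set $b := u$, $a_0 := v$, $a_1 := m$. Then $b \leq a_0 \vee a_1$ by construction; $b \nleq a_0$ since $v < u = b$; and $b \nleq a_1$ since $u \in \mathcal{I}$ while $m \notin \mathcal{I}$ and $b \leq m$ would, by minimality of $m$, force $b = m$. If a decomposition $b = b_0 \vee b_1$ with $b_0 \leq a_0$ and $b_1 \leq a_1$ existed, minimality of $m = a_1$ would force $b_1 = m$; hence $u = b_0 \vee m$, giving $m \leq u$. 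Downward closure of $\mathcal{I}$ would then place $m$ inside $\mathcal{I}$, a contradiction, so no witnessing decomposition exists and $\mathcal{R}^1_n(\mathcal{S})$ fails to be weakly distributive.

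The main obstacle I foresee is the production of the sub-element $v$: one needs $v \in \mathcal{I}$ with $v < u$ and $u \leq v \vee m$. This is not a purely formal matter. It will require inspecting the numbering-theoretic construction underlying Proposition~\ref{prop:ideal-with-no-min} --- the numbering $\nu_u$ representing $u$, parametrized by c.e.\ subsets of $\omega$ --- alongside a Friedberg-style minimal numbering $\nu_m$ as produced in the proof of Proposition~\ref{prop:Dor}, and checking that $\nu_u$ is reducible to $\nu_v \oplus \nu_m$ for some proper sub-element $v < u$ of $\mathcal{I}$. If such a $v$ does not arise for a generic minimal $m$, one can re-engineer the minimal numbering so that it carries the ``extra'' information about $\nu_u$ that is needed beyond a deficient $\nu_v$, in the style of the Friedberg-type arguments employed by Dorzhieva.
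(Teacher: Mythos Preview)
Your outline has the right high-level ingredients (an ideal with no minimal elements from Proposition~\ref{prop:ideal-with-no-min}, a minimal element, and the observation that a minimal element cannot lie in such an ideal), and your contradiction argument in the third paragraph is sound. However, the proof as written has a genuine gap, which you yourself identify: the existence of $v<u$ with $u\le v\vee m$. This is not a technicality to be filled in later; it is the entire content of the theorem. For a minimal element $m$ produced generically by Proposition~\ref{prop:Dor}, there is no reason whatsoever that joining $m$ to a proper sub-element of $\mathcal{I}$ should recover $u$. Your fallback of ``re-engineering the minimal numbering so that it carries the extra information about $\nu_u$'' is exactly what has to be done, but once you do it you will find that your template $(b,a_0,a_1)=(u,v,m)$ is too restrictive to accommodate the construction.

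The paper's proof does re-engineer the minimal numbering: it builds $\beta$ directly from $\alpha$ (the numbering inducing $u$) by compressing $\alpha$ along the complement of a maximal c.e.\ set, so that $\beta$ is minimal yet still tightly linked to $\alpha$. But even with this tailored $\beta$, the paper does \emph{not} take $b=u$ and $a_0$ below $u$. Instead it introduces a third auxiliary numbering $\gamma$ (a two-valued numbering driven by~$\emptyset'$) and sets
\[
\nu_0:=\alpha\oplus\gamma,\qquad \nu_1:=\beta,\qquad \mu:=\gamma\oplus\beta,
\]
so that $b=[\mu]$ is a join, not the top of $\mathcal{I}$, and $a_0=[\nu_0]$ lies \emph{above} $u$, not below it. The role of $\gamma$ is crucial: one needs $\gamma\nleq\beta$ (forced by the maximality of the set used to build $\beta$) and $\beta\nleq\alpha\oplus\gamma$ (forced by minimality of $\beta$ and $\beta\nleq\alpha$), and the contradiction comes from successively peeling off $\gamma$-pieces using Lemma~\ref{lem:decomposition} until one is left with $\beta\le\alpha$. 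Your two-element scheme $(u,v,m)$ has no room for this third ingredient, so even after you construct the right minimal $\beta$ you will need to reshape the counterexample.
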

\begin{proof}
	The proof follows the ideas of Theorem~3.2 in Ref.~\cite{BGS-03-isom-3}. We will build three $E^1_n$-computable numberings $\mu$, $\nu_0$, and $\nu_1$ of the family $\mathcal{S}$, which witness the failure of weak distributivity.
	
	First, we define auxiliary numberings $\alpha$, $\beta$, and $\gamma$. Let $\alpha$ be a $E^1_n$-com\-pu\-ta\-ble numbering of $\mathcal{S}$ such that the principal ideal $\mathcal{I}$, induced by $\alpha$ inside $\mathcal{R}^1_n(\mathcal{S})$, contains no minimal elements. The existence of such $\alpha$ follows from Proposition~\ref{prop:ideal-with-no-min} and Lemma~\ref{lem:dual}.
	
	Fix a maximal c.e. set $M$ and an element $A$ from $\mathcal{S}$. Assume that $\overline{M}:= \omega\setminus M = \{m_0<_{\omega} m_1 <_{\omega} m_2 <_{\omega} \dots\}$. Define
	\[
		\beta(k) := \begin{cases}
			\alpha(e), & \text{if } k=m_e \text{ for some } e\in\omega,\\
			A, & \text{if } k\in M.
		\end{cases}
	\]
	Clearly, $\beta$ is a $E^1_n$-computable numbering of $\mathcal{S}$. We claim that $\beta$ is a \emph{minimal} numbering of $\mathcal{S}$, i.e. $\beta$ induces a minimal element inside $\mathcal{R}^1_n(\mathcal{S})$. Indeed, suppose that $\xi$ is a numbering of $\mathcal{S}$, and a computable function $f$ reduces $\xi$ to $\beta$. Then the maximality of $M$ implies that the set $range(f)\setminus \overline{M}$ must be finite. This allows us to build a function $g$, which will reduce $\beta$ to $\xi$, as follows:
	\begin{itemize}
		\item[(a)] If $k\in range(f)\setminus \overline{M}$, then one can choose an appropriate value $g(k)$ in a non-uniform way.
		
		\item[(b)] If $k\in M\cup range(f)$, then the image $g(k)$ will be defined either as some $l\in f^{-1}(k)$, or as a fixed $a$ with $\xi(a)=A$.
	\end{itemize}
	A formal construction of the desired $g$ can be recovered from Case~2 in Proposition~\ref{prop:auxil}, or from Theorem~1.3 in Ref.~\cite{BGS-03-completeness}. Recall that the principal ideal $\mathcal{I}$, induced by $\alpha$, has no minimal elements. Hence, $\beta\nleq \alpha$.
	
	Fix different elements $B$ and $C$ from $\mathcal{S}$ such that $B\neq A \neq C$. Define
	\[
		\gamma(k) := \begin{cases}
			B, & \text{if } k\in \emptyset',\\
			C, & \text{if } k\not\in \emptyset'.
		\end{cases}
	\]
	Note that $\gamma$ indexes \emph{only} the finite family $\{ B, C\}$. Thus, clearly, $\alpha\nleq \gamma$ and $\beta\nleq \gamma$.
	
	\begin{claim}
		$\gamma$ is not reducible to $\beta$.
	\end{claim}
	\begin{proof}
		Assume that a function $f$ reduces $\gamma$ to $\beta$. Then $range(f) \subseteq \overline{M}$. Since the family $\mathcal{S}$ is infinite and $M$ is maximal, we deduce that the set $range(f)$ must be finite, but this contradicts the non-computablity of the set $\emptyset'$.
	\end{proof}
	
	From now on, we will employ the following useful fact without explicitly referencing it:	
	\begin{lemma}[essentially Proposition~3.1 in Ref.~\cite{BGS-03-isom-3}] \label{lem:decomposition}
		Let $\zeta,\xi_0,\xi_1$ be arbitrary numberings. If $\zeta \leq \xi_0\oplus \xi_1$, then at least one of the following conditions holds:
		\begin{enumerate}
			\item $\zeta \leq \xi_0$.
			
			\item $\zeta \leq \xi_1$.
			
			\item There are numberings $\zeta_0$ and $\zeta_1$ such that $\zeta_0\leq \xi_0$, $\zeta_1\leq \xi_1$, and $\zeta \equiv \zeta_0 \oplus \zeta_1$. Moreover, if the numberings $\zeta$, $\xi_0$, and $\xi_1$ are $E^1_n$-com\-pu\-ta\-ble, then both $\zeta_0$ and $\zeta_1$ are also $E^1_n$-computable.
		\end{enumerate}
	\end{lemma}
	
	Note that a similar fact has been already used in the proof of Proposition~\ref{prop:distr}.
	
	We define the desired $E^1_n$-computable numberings of $\mathcal{S}$:
	\[
		\nu_0 := \alpha \oplus \gamma, \quad \nu_1 := \beta, \quad \mu:=\gamma\oplus \beta.
	\]
	Clearly, $\mu \leq \nu_0 \oplus \nu_1$.
	
	\begin{claim}
		$\mu\nleq \nu_0$ and $\mu\nleq \nu_1$.
	\end{claim}
	\begin{proof}
		Since $\gamma\nleq \beta$, we deduce that $\mu\nleq \nu_1$. Towards a contradiction, assume that $\gamma\oplus \beta \leq \alpha\oplus\gamma$. Then $\beta \leq \alpha\oplus \gamma$. Since $\beta\nleq\alpha$ and $\beta\nleq \gamma$, there are numberings $\beta_0$ and $\beta_1$ with $\beta \equiv \beta_0\oplus \beta_1$, $\beta_0\leq \alpha$, and $\beta_1\leq \gamma$.
		
		Clearly, any set $X\in \mathcal{S}\setminus \{ B,C\}$ has a $\beta_0$-index. Moreover, by putting
		\[
			\widetilde{\beta}_0(k) := \begin{cases}
				B, & \text{if } k = 0,\\
				C, & \text{if } k = 1,\\
				\beta_0(k-2), & \text{if } k\geq 2,
			\end{cases}
		\]
		we obtain that the numbering $\widetilde{\beta}_0$ indexes \emph{the whole} family $\mathcal{S}$, $\beta \equiv \widetilde{\beta}_0 \oplus \beta_1$, and $\widetilde{\beta}_0 \leq \alpha$. The minimality of $\beta$ implies that $\widetilde{\beta}_0 \equiv \beta$. Hence, $\beta \leq \alpha$, which gives a contradiction.		
	\end{proof}
	
	Assume, towards a contradiction, that (the degrees of) $\mu$, $\nu_0$, and $\nu_1$ satisfy the weak distributivity property. Then there are $E^1_n$-com\-pu\-ta\-ble numberings $\mu_0$ and $\mu_1$ of $\mathcal{S}$ such that $\mu \equiv \mu_0 \oplus \mu_1$, $\mu_0 \leq \nu_0$, and $\mu_1 \leq \nu_1$. Since $\nu_1=\beta$ is minimal, we have $\mu_1\equiv \beta$.
	
	Clearly, $\mu_0\nleq \gamma$. Define a new numbering $\alpha_0$ of the family $\mathcal{S}$ as follows:
	\begin{itemize}
		\item[(a)] If $\mu_0 \leq \alpha$, then set $\alpha_0:=\mu_0$.
		
		\item[(b)] Otherwise, there are numberings $\alpha^{\ast}$ and $\gamma^{\ast}$ such that $\mu_0\equiv \alpha^{\ast} \oplus \gamma^{\ast}$, $\alpha^{\ast} \leq \alpha$, and $\gamma^{\ast} \leq \gamma$. Put
		\[
			\alpha_0(k) := \begin{cases}
				B, & \text{if } k = 0,\\
				C, & \text{if } k = 1,\\
				\alpha^{\ast}(k-2), & \text{if } k\geq 2.
			\end{cases}
		\]
	\end{itemize}
	Clearly, in each of the cases~(a) and~(b), $\alpha_0$ is reducible to both $\alpha$ and $\mu_0$.
	
	Recall that $\mu_0 \leq \mu = \gamma\oplus \beta$ and hence, $\alpha_0 \leq \gamma\oplus \beta$. Obviously, $\alpha_0\nleq \gamma$. We define a numbering $\alpha_1$ of $\mathcal{S}$:
	\begin{itemize}
		\item[(a)] If $\alpha_0 \leq \beta$, then set $\alpha_1:=\alpha_0$.
		
		\item[(b)] Otherwise, there are numberings $\gamma'$ and $\beta'$ such that $\alpha_0\equiv \gamma' \oplus \beta'$, $\gamma' \leq \gamma$, and $\beta' \leq \beta$. Set
		\[
			\alpha_1(k) := \begin{cases}
				B, & \text{if } k = 0,\\
				C, & \text{if } k = 1,\\
				\beta'(k-2), & \text{if } k\geq 2.
			\end{cases}
		\]
	\end{itemize}
	Clearly, we have $\alpha_1\leq \alpha_0$ and $\alpha_1 \leq \beta$.
	
	Since $\beta$ is minimal, we deduce that $\beta \equiv \alpha_1 \leq \alpha_0 \leq \alpha$, which contradicts the original choice of the numbering $\alpha$. Therefore, the numberings $\mu,\nu_0,\nu_1$ witness the failure of weak distributivity. Theorem~\ref{theo:not-weak-distr} is proved.
\end{proof}



\section{Further discussion} \label{sect:further}

After all the results of previous sections, it is completely possible that an interested reader would ask the following natural question:

\begin{problem}
	Let $\Gamma$ be a class of the analytical hierarchy. What results on Rogers semilattices of $\Gamma$-computable families can be obtained, if one replaces \textbf{PD} with another set-theoretic assumption?
\end{problem}

Here we give a (very brief) case study for this problem: We assume the \emph{Axiom of Constructibility} ($V=L$) and list some of results, which can be obtained under this assumption.

The Axiom of Constructibility says that every set is constructible. A formal statement of the axiom can be found, e.g., in Chap.~13 of Ref.~\cite{Jech}.

Recall that the key property of a class $E^1_n$, which was heavily employed in the previous sections, is the prewellordering property (see Section~\ref{subsub:prewell}).

\begin{theorem}[see Exercises 5A.3 and 4B.10 of Ref.~\cite{Moschovakis}] \label{theo:Godel}
	Assume $(V=L)$. For every $n\geq 3$, the class $\Sigma^1_n$ has the prewellordering property. Consequently, $\Sigma^1_n$ satisfies the reduction principle.
\end{theorem}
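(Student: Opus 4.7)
The plan is to reduce the theorem to a single key ingredient from set theory: the celebrated result of G\"odel that under $V=L$ the continuum $\omega^{\omega}$ admits a definable wellordering $<_L$ whose graph is $\Sigma^1_2$. This is obtained by stratifying reals according to the least stage of the constructible hierarchy $L_{\alpha}$ at which they appear and breaking ties via the canonical wellordering of $L$. I would first record this fact (with a pointer to Chap.~13 of Ref.~\cite{Jech} or to Moschovakis), noting additionally that $<_L$ is $\Sigma^1_2$-\emph{good} in the sense that the map assigning to $f$ the code of its initial $<_L$-segment is itself $\Sigma^1_2$-definable. The rest of the proof never invokes $V=L$ directly: every subsequent step uses only the existence of a $\Sigma^1_2$-good wellordering.

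Next I would construct the desired $\Sigma^1_n$-norm for an arbitrary $A \in \Sigma^1_n$. Write $A(x) \Leftrightarrow \exists f\, \phi(x,f)$ with $\phi \in \Pi^1_{n-1}$, and for $x \in A$ let $f_x$ denote the $<_L$-least real $f$ with $\phi(x,f)$; set $\varphi(x)$ equal to the ordinal rank of $f_x$ under $<_L$. The associated prewellordering is $x \preceq y \Leftrightarrow f_x \leq_L f_y$. The key quantifier-contraction idea is that, since $<_L$ is $\Sigma^1_2$ and $n \geq 3$, we have $\Sigma^1_2 \subseteq \Delta^1_{n-1}$, so the wellordering relation is completely absorbed into the complexity of $\phi$. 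Concretely, one shows that
\[
x \preceq^{\varphi}_{\Sigma^1_n} y \ \Leftrightarrow\ \exists f \bigl[ \phi(x,f) \,\&\, \forall g\, (g <_L f \to \neg\phi(x,g)) \,\&\, \exists h\, (\phi(y,h) \,\&\, f \leq_L h) \bigr]
\]
defines a $\Sigma^1_n$ set, and that its ``dual" formulation with the leading quantifier made universal lands in $\Pi^1_n$; a careful quantifier count using $\phi \in \Pi^1_{n-1}$ and the $\Sigma^1_2$-goodness of $<_L$ (which allows replacing the bounded quantifier $\forall g <_L f$ by a $\Pi^1_{n-1}$ condition when $n \geq 3$) yields the required complexities. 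This establishes the prewellordering property for $\Sigma^1_n$.

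The reduction principle then follows by the usual argument from norms. Given $A, B \in \Sigma^1_n$ with $\Sigma^1_n$-norms $\varphi_A, \varphi_B$, I would set
\[
A^{\ast} := \{ x \in A \,\colon x \notin B \text{ or } \varphi_A(x) \leq \varphi_B(x) \,\&\, \varphi_B(x) \not< \varphi_A(x) \},
\]
and define $B^{\ast}$ symmetrically (with a strict inequality one way to break ties), using the $\Sigma^1_n$ and $\Pi^1_n$ relations provided by the norms to witness $\Sigma^1_n$-definability of $A^{\ast}, B^{\ast}$; the ``periodicity''-style verification is outlined in Corollary~6B.2 of Ref.~\cite{Moschovakis}. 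The main obstacle in the whole argument is the quantifier bookkeeping in the middle paragraph: the naive ``least witness'' formula places the prewellordering one level too high, and the proof only goes through because starting from $n = 3$ one has enough room to collapse the $\Sigma^1_2$-definable wellordering into the ambient $\Delta^1_n$ fabric. This is exactly why the statement begins at $n \geq 3$ rather than $n \geq 2$.
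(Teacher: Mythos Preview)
The paper does not prove this theorem at all: it is stated as a known result, with pointers to Exercises~5A.3 and~4B.10 of Moschovakis, and is then used as a black box to transfer Theorems~\ref{theo:greatest} and~\ref{theo:min-cover} to the $(V=L)$ setting. So there is no ``paper's proof'' to compare against; your sketch is precisely the classical Addison argument that those exercises are asking the reader to reconstruct, and the derivation of reduction from prewellordering via norm-based tie-breaking is exactly the content of~4B.10.

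Two small slips are worth flagging. First, the sentence ``since $<_L$ is $\Sigma^1_2$ and $n\geq 3$, we have $\Sigma^1_2 \subseteq \Delta^1_{n-1}$'' is false at $n=3$ (it would say $\Sigma^1_2\subseteq\Delta^1_2$). You evidently know this, since you immediately invoke the $\Sigma^1_2$-\emph{goodness} of $<_L$ to handle the bounded quantifier $\forall g <_L f$; that is indeed the correct mechanism, and you should lead with it rather than with the incorrect inclusion. The point is not that the wellordering relation is absorbed by a blanket class inclusion, but that each initial segment of $<_L$ is (uniformly) coded by a real in a $\Sigma^1_2$ way, so that the bounded universal quantifier over predecessors can be rewritten without raising the level. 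Second, in your reduction formula the conjunction ``$\varphi_A(x)\leq\varphi_B(x)\ \&\ \varphi_B(x)\not<\varphi_A(x)$'' is redundant (the two clauses say the same thing); what you want is the asymmetric pair $A^{\ast}=\{x\in A: x\notin B\ \text{or}\ \varphi_A(x)\leq\varphi_B(x)\}$ and $B^{\ast}=\{x\in B: x\notin A\ \text{or}\ \varphi_B(x)<\varphi_A(x)\}$, with the strict inequality on exactly one side to ensure disjointness.
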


Therefore, one can repeat the proofs of Theorems~\ref{theo:greatest} and~\ref{theo:min-cover} verbatim, and obtain the following:

\begin{corollary}[$\mathbf{V=L}$]
	Let $n\geq 3$, and let $\mathcal{S}$ be a finite family of $\Sigma^1_n$ sets.
	\begin{enumerate}
		\item The Rogers semilattice $\mathcal{R}_{\Sigma^1_n}(\mathcal{S})$ has the greatest element if and only if the family $\mathcal{S}$ contains a least element under $\subseteq$.
		
		\item  If $\mathcal{R}_{\Sigma^1_n}(\mathcal{S})$ has no greatest element, then every element from $\mathcal{R}_{\Sigma^1_n}(\mathcal{S})$ has a minimal cover.
	\end{enumerate}
\end{corollary}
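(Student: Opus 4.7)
The plan is to verify that the proofs of Theorem~\ref{theo:greatest} and Theorem~\ref{theo:min-cover} use only abstract structural properties of the class $E^1_n$, and then to check that $\Sigma^1_n$ (for $n\geq 3$, assuming $V=L$) enjoys those same properties. First, I would catalogue the places where set-theoretic hypotheses enter Section~\ref{sect:finite_fam}. There are essentially four: (i) closure of the class under reductions and joins, which makes $\mathcal{R}_{\Sigma^1_n}(\mathcal{S})$ a well-defined upper semilattice; (ii) existence of a universal numbering in the class, used to fix $\nu_U$ in Theorem~\ref{theo:greatest}; (iii) the prewellordering property, used through Lemma~\ref{lem:main-prop} to produce the ``building blocks'' $\widehat{[x]}_{\nu(k)}$; and (iv) the reduction principle, used in the $(\Rightarrow)$ direction of Theorem~\ref{theo:greatest} and in the definition of the function $f$ in Theorem~\ref{theo:min-cover}. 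Items (i) and (ii) are already established unconditionally for $\Sigma^1_n$ in Section~\ref{subsect:numb} and Corollary~\ref{coroll:univ}. Items (iii) and (iv) are precisely what Theorem~\ref{theo:Godel} supplies under $V=L$ when $n\geq 3$.

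Second, I would restate Lemma~\ref{lem:main-prop} in the $\Sigma^1_n$ setting. Given a $\Sigma^1_n$-computable numbering $\nu$, the graph $G_{\nu}$ is $\Sigma^1_n$, so the prewellordering property picks out a $\Sigma^1_n$-norm $\varphi$ on $G_{\nu}$ with associated relations $\preceq^{\varphi}_{\Sigma^1_n}$ and $\preceq^{\varphi}_{\Pi^1_n}$. The definitions of $\sqsubseteq^{\nu}$, $\preceq^{\nu(k)}$ and $\widehat{[x]}_{\nu(k)}$ transcribe unchanged, and the argument of Lemma~\ref{lem:main-prop} then yields verbatim that $\preceq^{\nu(k)}$ is a wellordering of $\nu(k)$ and that each $\widehat{[x]}_{\nu(k)}$ is a $\Delta^1_n$ subset of $\nu(k)$, defined by formulas that depend only on the chosen norm, not on $k$ or $x$.

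With the adapted lemma in hand, the proof of Theorem~\ref{theo:greatest} proceeds exactly as before. In the $(\Leftarrow)$ direction, the formulas $\psi^C_l(k)$ and $\psi^C(x,k)$ are once again equivalent to $\Sigma^1_n$ conditions (by the $\Delta^1_n$-definability of $\sqsubseteq^{\nu_U}$ and closure of $\Sigma^1_n$ under number quantifiers, finite conjunctions, and disjunctions), so the constructed $\xi$ is a $\Sigma^1_n$-computable universal numbering. In the $(\Rightarrow)$ direction, Theorem~\ref{theo:Godel} replaces Theorem~\ref{theo:reduction} to deliver the pairwise disjoint $\Delta^1_n$ sets $R_i$ used to diagonalize. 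The same substitution drives Proposition~\ref{prop:auxil} and Theorem~\ref{theo:min-cover}: the diagonal function $f$ of the proof of Theorem~\ref{theo:min-cover} again has $\Delta^1_n$ graph, Proposition~\ref{prop:auxil} produces a minimal cover $\mu$ of $\nu$, and its $\Sigma^1_n$-computability is verified by the same quantifier count as in the original argument, since $\Sigma^1_n$ absorbs existential number quantifiers.

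The main potential obstacle is bookkeeping rather than conceptual: one must confirm that every appeal in Section~\ref{sect:finite_fam} either to $E^1_n$ or to its dual $\Upsilon^1_n$ (e.g.\ Condition~(c) in the proof of Lemma~\ref{lem:main-prop}, where the $\breve{\Gamma}$-relation is invoked) remains valid after the substitution $E^1_n \mapsto \Sigma^1_n$, $\Upsilon^1_n \mapsto \Pi^1_n$. This amounts to checking that the pair $(\Sigma^1_n,\Pi^1_n)$ is closed under the very same syntactic operations used in the original arguments (projection over number quantifiers, finite Boolean combinations, and substitution of $\Delta^1_n$ functions), which holds for every $n\geq 1$. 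Hence the substitution is harmless, and parts~(1) and~(2) of the corollary follow by direct rewriting of the proofs of Theorem~\ref{theo:greatest} and Theorem~\ref{theo:min-cover}.
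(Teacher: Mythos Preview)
Your proposal is correct and follows exactly the approach the paper intends: the paper's own justification is simply that one can ``repeat the proofs of Theorems~\ref{theo:greatest} and~\ref{theo:min-cover} verbatim'' once Theorem~\ref{theo:Godel} supplies the prewellordering and reduction properties for $\Sigma^1_n$ under $V=L$. Your catalogue of the four ingredients (closure, universal numbering, prewellordering, reduction) and your check that Lemma~\ref{lem:main-prop} transcribes to the $(\Sigma^1_n,\Pi^1_n)$ pair is in fact more explicit than what the paper provides.
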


In particular, we observe the following simple fact, which is still interesting on its own:
\begin{remark}
	Let $\mathcal{S}$ be a finite family of $\Sigma^1_3$ sets.
	\begin{itemize}
		\item[(a)] If one assumes \textbf{PD}, then $\mathcal{R}_{\Sigma^1_3}(\mathcal{S})$ has a greatest element iff $\mathcal{S}$ contains a \emph{greatest} element under $\subseteq$ (Corollary~\ref{corol:PD-fin}).
		
		\item[(b)] If one assumes $(V=L)$, then $\mathcal{R}_{\Sigma^1_3}(\mathcal{S})$ has a greatest element iff $\mathcal{S}$ contains a \emph{least} element under $\subseteq$.
	\end{itemize}
\end{remark}

We strongly conjecture that one can employ the techniques developed by Tanaka~\cite{Tanaka} to provide a complete solution of Problem~\ref{prob:02} under \textbf{PD} and under $(V=L)$.

As a concluding remark, we note the following: It seems that all our proofs essentially employed only the properties inherent to Spector pointclasses (see Section 4C of Ref.~\cite{Moschovakis}). Hence, we formulate the following:

\begin{problem}
	Develop the theory of Rogers semilattices for Spector pointclasses.
\end{problem}


\bibliographystyle{plain}
\bibliography{Numberings}

\end{document}